\newtheorem{theorem}{Theorem}[section]
\newtheorem{corollary}{Corollary}
\newtheorem{lemma}[theorem]{Lemma}
\theoremstyle{definition}
\newtheorem{remark}{Remark}
\newcommand{\mS}{\mathcal{S}}
\newcommand{\noi}{\noindent}
\title[Long-time behavior of plasma in 1.5D]
      {On long-time behavior of monocharged and neutral plasma in one and one-half dimensions}
\author[Robert Glassey, Stephen Pankavich and Jack Schaeffer]{}
\subjclass{Primary: 35L60, 35B40; Secondary: 35Q60, 82D10.}
 \keywords{kinetic theory; plasma; Vlasov-Poisson; monocharged; neutral; asymptotic behavior.}
 \email{glassey@indiana.edu}
 \email{sdp@uta.edu}
 \email{js5m@math.cmu.edu}
\begin{document}
\maketitle

\centerline{\scshape Robert Glassey}
\medskip
{\footnotesize
 \centerline{Department of Mathematics}
   \centerline{Indiana University}
   \centerline{Bloomington, IN 47405, USA}
} 

\medskip

\centerline{\scshape Stephen Pankavich}
\medskip
{\footnotesize
 \centerline{Department of Mathematics}
   \centerline{University of Texas at Arlington}
   \centerline{Arlington, TX 76019, USA}
}

\medskip

\centerline{\scshape Jack Schaeffer}
\medskip
{\footnotesize
 \centerline{Department of Mathematical Sciences}
   \centerline{Carnegie Mellon University}
   \centerline{Pittsburgh, PA 15213, USA}
}

\bigskip

\begin{abstract}
The motion of a collisionless plasma - a high-temperature, low-density, ionized gas - is described by the Vlasov-Maxwell system. In the presence of large velocities, relativistic corrections are meaningful, and when symmetry of the particle densities is assumed this formally becomes the relativistic Vlasov-Poisson system.  These equations are considered in one space dimension and two momentum dimensions in both the monocharged (i.e., single species of ion) and neutral cases.  The behavior of solutions to these systems is studied for large times, yielding estimates on the growth of particle momenta and a lower bound, uniform-in-time, on norms of the charge density.  We also present similar results in the same dimensional settings for the classical Vlasov-Poisson system, which excludes relativistic effects.
\end{abstract}

\section{Introduction}

The fundamental equations which describe the time evolution of a collisionless plasma in the presence of large velocities are given by the relativistic Vlasov-Maxwell system (RVM).  The simplest form of these equations that retains electromagnetic effects can be obtained by posing the problem in one space dimension and two momentum dimensions, the so-called ``one and one-half'' dimensional system with a single species of ion:

\begin{equation} \tag{RVM} \label{RVM} \left. \begin{gathered}
\partial_t f + \hat{v}_1 \partial_x f + (E_1 + \hat{v}_2B)\partial_{v_1} f + (E_2 - \hat{v}_1B)\partial_{v_2} f = 0\\
\rho(t,x) = \int f(t,x,v) \ dv, \qquad j(t,x) = \int \hat{v} f  \ dv \\
E_1(t,x) = \frac{1}{2}\left ( \int_{-\infty}^x \rho(t,y) dy - \int_x^\infty \rho(t,y) dy\right )\\
\partial_t B + \partial_x E_2 =0, \qquad \partial_t E_2 + \partial_x B = - j_2\\
\end{gathered} \right \}
\end{equation}
where
$$ \hat{v} = \frac{v}{\sqrt{1 + \vert v \vert^2}}$$ is the relativistic velocity.  If one further assumes that the field components $E_2$ and $B$ are initially zero and the number density $f$ is initially even in $v_2$, then these qualities persist for all $ t > 0$, and the system reduces to the ``one and one-half'' dimensional relativistic Vlasov-Poisson system:

\begin{equation} \tag{RVP} \label{RVP} \left. \begin{gathered}
\partial_t f + \hat{v}_1 \partial_x f + E_1 \partial_{v_1} f = 0 \\
\rho(t,x) = \int f(t,x,v) \ dv\\
E_1(t,x) = \frac{1}{2}\left ( \int_{-\infty}^x \rho(t,y) dy - \int_x^\infty \rho(t,y) dy\right )\\
\end{gathered} \right \}
\end{equation}
Here, it should be noted that $f = f(t,x,v_1,v_2)$ depends on two components of momentum.  Finally, if one wishes to consider a classical system without relativistic effects, then the velocity is represented by $v \in \mathbb{R}^2$ and not $\hat{v}$.  In this way, one then obtains the ``one and one-half'' dimensional classical Vlasov-Poisson system:
\begin{equation} \tag{VP} \label{VP} \left. \begin{gathered}
\partial_t f + v_1 \partial_x f + E_1 \partial_{v_1} f = 0 \\
\rho(t,x) = \int f(t,x,v) \ dv\\
E_1(t,x) = \frac{1}{2}\left ( \int_{-\infty}^x \rho(t,y) dy - \int_x^\infty \rho(t,y) dy\right )\\
\end{gathered} \right \}
\end{equation}
Though the difference between (\ref{RVP}) and (\ref{VP}) seems quite subtle (merely a ``hat'' on $v$ is included or excluded), it is well-known that the behavior of these two systems can drastically differ.  This will be further highlighted by Theorem \ref{Thm1} which displays a significant difference in the long-time dynamics of the charge density when comparing the classical and relativistic systems.  In each of the systems above $t \geq 0$ represents time, $x \in \mathbb{R}$ is position, $v \in \mathbb{R}^2$ represents momentum, $E$ and $B$ are electric and magnetic fields, $f$ is the number density in phase space of particles, and all physical constants, including the particle mass and speed of light, have been normalized.\\

In addition to the monocharged problems (\ref{RVP}) and (\ref{VP}), we will consider the analogous neutral problems that include an arbitrary number $N$ species of ion, given by:
\begin{equation} \tag{RVPN} \label{RVPN} \left.
\begin{gathered}
\partial_t f_\alpha + \hat{v}_1 \partial_x f_\alpha + e_\alpha E_1 \partial_{v_1} f_\alpha = 0 \\
\rho(t,x) = \int \sum_\alpha e_\alpha f_\alpha (t,x,v) \ dv\\
E_1(t,x) = \frac{1}{2}\left ( \int_{-\infty}^x \rho(t,y) dy - \int_x^\infty \rho(t,y) dy\right )\\
\end{gathered} \right \}
\end{equation}
and
\begin{equation} \tag{VPN} \label{VPN} \left.
\begin{gathered}
\partial_t f_\alpha + v_1 \partial_x f_\alpha + e_\alpha E_1 \partial_{v_1} f_\alpha = 0 \\
\rho(t,x) = \int \sum_\alpha e_\alpha f_\alpha (t,x,v) \ dv\\
E_1(t,x) = \frac{1}{2}\left ( \int_{-\infty}^x \rho(t,y) dy - \int_x^\infty \rho(t,y) dy\right )\\
\end{gathered} \right \}
\end{equation}
respectively, where we have a Vlasov equation for the number density $f_\alpha$ of each species indexed by $\alpha = 1,...,N$, and $e_\alpha$ is the charge of the $\alpha$ species.  Again, the $f_\alpha$ depend upon two components of momentum $v_1$ and $v_2$, and physical constants, such as particle mass, have been normalized.  In the cases of (\ref{RVPN}) and (\ref{VPN}), we will assume the condition of neutrality
\begin{equation}
\tag{N} \label{Neutrality}
\iint \sum_\alpha e_\alpha f_{0\alpha} (x,v)  \ dv \ dx = 0.
\end{equation}
which, by conservation of charge, guarantees $$\int \rho(t,x) \ dx = \int \rho(0,x) \ dx = 0$$ for all $t \geq 0$.  Another main focus of the paper will be to demonstrate the differences in qualitative behavior which arise when one compares monocharged and neutral plasmas, such as (\ref{RVP}) with (\ref{RVPN}) or (\ref{VP}) when compared with (\ref{VPN}).  This is displayed by the differences in large time behavior of particle momenta for the systems (Theorems \ref{Thm3}, \ref{Thm4}, \ref{Thm5}, and \ref{Thm6}).\\

Over the past twenty years, considerable progress has been made regarding the well-posedness of classical solutions to the Cauchy problem for Vlasov-Maxwell and Vlasov-Poisson set in a variety of dimensions (see \cite{Glassey}).  Though the issue of smooth global existence for arbitrary data remains an open question in three dimensions for both relativistic problems, it has been proven for many lower-dimensional analogues, such as (\ref{RVM}) (see \cite{GlaSch}), as well as, for the classical Vlasov-Poisson system posed in three dimensions (\cite{LP}, \cite{Pfa}, and \cite{Sch3D}).  More specifically, it is well known that solutions of (\ref{RVP}), (\ref{RVPN}), (\ref{VP}), and (\ref{VPN}) remain smooth for all $t \geq 0$ with $f(t,\cdot, \cdot)$, or $f_\alpha(t,\cdot, \cdot)$ for multi-species problems, compactly supported for all $t \geq 0$, assuming that the data possess the same properties.  While these existence theorems have contributed greatly to the mathematical understanding of the equations, very few results concerning time asymptotic behavior have appeared in the literature.  Due to the so-called ``dilation identity'', some time decay is known for Vlasov-Poisson in the classical, three-dimensional case (\cite{GS}, \cite{IR}, \cite{Per}).  Additionally, there are time decay results for the monocharged plasma (\ref{VP}) when $f$ is independent of $v_2$ (\cite{BKR}, \cite{BFFM}, \cite{Sch}).  The only known results regarding long-time behavior for a relativistic plasma were obtained in \cite{Horst} and, more recently, \cite{GPS}.  These deal with the three-dimensional, monocharged problem with spherical symmetry and the one-dimensional, neutral problem with two species, respectively.  References \cite{DD}, \cite{Dol}, and \cite{DR} are also mentioned since they deal with time-dependent rescalings and time decay for other kinetic equations.  We cite \cite{Rein} as a general reference regarding the study of the Vlasov-Poisson system.\\

Due to the large assortment of phenomena that may be exhibited by plasma of different species, one expects the behavior to be quite complicated and depend on many factors, including the size of the total charge, the sign of the net charge, and the variety of ionic species that are involved.  For simplicity, we focus on cases in which the plasma is monocharged (i.e., composed of a single species of ion) or is composed of an arbitrary number of species and satisfies the condition of neutrality (i.e., possesses zero net charge).  Hence, the methods used in the previously mentioned articles do not apply.  The present work, then, seeks to determine information regarding the large time behavior of solutions to (\ref{RVP}), (\ref{VP}), (\ref{RVPN}), and (\ref{VPN}) under these conditions.  We remark that all of the results which follow will continue to hold in the strictly one-dimensional case, in which the number density $f$ (or $f_\alpha$ if multiple species are considered) is independent of $v_2$.  However, we keep the $v_2$ dependence throughout because solutions to these equations also satisfy (\ref{RVM}), thereby providing some information regarding time asymptotics in this case.  In addition, a previous result \cite{GPS} concerning a neutral plasma in one dimension did not readily generalize to the case in which $v_2$ is included.  Thus, we believe it necessary to present the more general results for the ``one and one-half'' dimensional problems, rather than asking the reader to believe that one-dimensional results could be easily extended to this case.  Of course, we are also interested in the long-time behavior of (\ref{RVM}), and this problem is explored in the recent paper \cite{GPSRVM}. \\

Throughout the paper, we make the assumption that the initial data $f(0,x,v) = f_0(x,v)$, or in the case of multiple species $f_\alpha(0,x,v) = f_{0\alpha}(x,v)$, are $C^1$ and compactly supported.  We first consider the relativistic, monocharged case (\ref{RVP}) and state the main result that, unlike solutions in the classical case, those which satisfy (\ref{RVP}) do not give rise to a charge density that decays in time.
\begin{theorem} \label{Thm1}
Consider (\ref{RVP}) and assume $f_0 \in C_c^1(\mathbb{R}^3)$ is not identically equal to zero, then there exists $C > 0 $ such that $$\Vert \rho(t) \Vert_p \geq C$$ for all $t \geq 0, p \in [1,\infty]$.
\end{theorem}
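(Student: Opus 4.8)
The plan is to reduce the conclusion, for all $p$ at once, to a single statement of \emph{persistent concentration}: that a fixed positive amount of charge never disperses out of a window of fixed width. I start from the standing facts (as is usual, and as is needed here, $f_0\ge 0$). Solutions of (\ref{RVP}) are global and $C^1$ with $f(t,\cdot,\cdot)\ge 0$ and, since $|\hat v_1|<1$, supported in $[-R_0-t,R_0+t]\times\mathbb{R}^2$ for a fixed $R_0$; the characteristic flow is measure preserving (because $\partial_x\hat v_1=\partial_{v_1}E_1=0$), so $\|f(t)\|_q=\|f_0\|_q$ for every $q\in[1,\infty]$, and in particular $\|\rho(t)\|_1=\iint f(t)\,dv\,dx=\|f_0\|_1=:m_0>0$ for all $t\ge 0$. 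This settles $p=1$. Two further a priori estimates, both coming from the relativistic corrections, do the remaining work: from $E_1=\tfrac12\left(\int_{-\infty}^x\rho-\int_x^\infty\rho\right)$ with $\rho\ge 0$ and $\int\rho=m_0$ one gets the uniform bound $|E_1(t,x)|\le\tfrac12 m_0$, hence $|\dot V_1|\le\tfrac12 m_0$ and at most linear growth of $|V_1(t)|$ along characteristics; and $|\hat V_1|<1$ forces $|X(t)-X(0)|\le t$ along every characteristic.

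Here is the reduction. It suffices to produce constants $L_0,\mu_0>0$, independent of $t$, and for each $t\ge0$ an interval $I_t$ with $|I_t|\le L_0$ and $\int_{I_t}\rho(t,x)\,dx\ge\mu_0$. Indeed, Hölder's inequality on $I_t$ gives, for every $p\in[1,\infty]$, $\mu_0\le\int_{I_t}\rho(t)\le\|\rho(t)\|_{L^p(I_t)}\,|I_t|^{1-1/p}\le\|\rho(t)\|_p\max(L_0,1)$, so $\|\rho(t)\|_p\ge\mu_0/\max(L_0,1)=:C$, uniformly in $p$ and $t$.

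Everything therefore rests on showing the charge cannot completely spread out, and this is precisely where relativity enters, in sharp contrast with (\ref{VP}): classically the self-repulsion accelerates particles outward with acceleration of order $m_0$, the charge spreads over a region of length comparable to $t^2$, and $\|\rho(t)\|_p\to0$ for $p>1$; relativistically $|\hat v_1|<1$ rules this out and the charge instead collects into (at most) two beams moving at speed tending to $1$, each of bounded spatial width. To make this quantitative, set $m(t,x)=\int_{-\infty}^x\rho(t,y)\,dy$, so that $E_1(t,x)=m(t,x)-\tfrac12 m_0$, and let $X_+(t)$ be the right endpoint of the support of $\rho(t,\cdot)$, along which $m\equiv m_0$ and hence $\dot V_1\equiv\tfrac12 m_0$. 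Let $A$ be the set of characteristics starting in the top $x$-quartile (whose $f_0$-mass is $\tfrac14 m_0$) and whose mass-coordinate $m(t,X(t))$ never drops below $\tfrac34 m_0$, and let $\mu_0$ be the $f_0$-mass of $A$. Along any characteristic of $A$ one has $E_1(t,X(t))\ge\tfrac14 m_0>0$, so $V_1$ increases and $V_1(t)\ge V_1(0)+\tfrac14 m_0 t$; integrating $\dot X=\hat V_1$ and using that, once $V_1(s)$ is of order $m_0 s$, the integrand $\hat V_1(s)$ differs from $1$ by a term of order $s^{-2}$, one obtains $X(t)=X(0)+t+O(1)$ with the $O(1)$ bounded in terms of $R_0$ and $m_0$ alone; the same estimate holds for $X_+(t)$. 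Hence at each time $t$ all of $A$ lies in a single interval $I_t$ about $t$ of fixed width, and measure preservation of the flow yields $\int_{I_t}\rho(t)\ge\mu_0$, completing the reduction.

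The step that is genuinely hard is proving $\mu_0>0$ — that a positive fraction of the top-quartile particles really keep $m(t,X(t))\ge\tfrac34 m_0$ for all time. This is not automatic, since particles overtake one another so that $m(t,X(t))$ need not be monotone; explicitly, $\frac{d}{dt}m(t,X(t))=\int\left(\hat V_1(t)-\hat v_1\right)f(t,X(t),v)\,dv$, whose sign is not fixed. I would argue near the right edge, where the situation is favorable: the right-edge characteristic keeps $m\equiv m_0$, the rate at which a trailing characteristic's mass-coordinate can decrease is bounded by the charge lying strictly to its right times the local velocity spread, and since both the right-edge characteristic and a nearby one acquire momenta of order $m_0 t$, their relativistic velocities converge to $1$ fast enough that the separation $X_+(t)-X(t)$ remains bounded and no mass can slip past $X_+(t)-L_0$. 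Turning this into a $t$-uniform lower bound for $\mu_0$ — in effect a Gronwall estimate for the separation of neighboring right-edge characteristics that accounts for crossings — is the technical heart; once it is established the theorem follows from the reduction above, and a mirror-image argument at the left edge explains the second beam.
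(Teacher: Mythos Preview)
Your reduction is exactly right and matches the paper: produce, for each $t$, an interval $I_t$ of width $\le L_0$ carrying charge $\ge\mu_0$, and conclude by H\"older. The heuristic picture---relativistic speed limit forces the repelled particles to pile into a beam of bounded width near the right edge---is also the paper's. The difference is in how the ``technical heart'' is handled, and here you have a genuine gap.

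You track only the spatial mass-coordinate $m(t,X(t))=\int_{-\infty}^{X(t)}\rho(t,y)\,dy$, correctly observe it is \emph{not} monotone along characteristics because of crossings, and propose to close a bootstrap (mass-coordinate high $\Rightarrow$ large momentum $\Rightarrow$ velocity near $1$ $\Rightarrow$ stays near edge $\Rightarrow$ mass-coordinate high). As written this is circular, and you do not carry it out. The paper sidesteps the crossing problem entirely by tracking a \emph{phase-space} quantity instead: along any characteristic $(X(s),V(s))$,
\[
\frac{d}{ds}\int_{-\infty}^{X(s)}\!\int\!\int_{-\infty}^{V_1(s)} f(s,x,v)\,dv_1\,dv_2\,dx \;\ge\; 0,
\]
and likewise for the ``upper-right'' integral $\int_{X(s)}^\infty\int\int_{V_1(s)}^\infty f$. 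Both monotonicities are immediate from $\partial_x E_1\ge 0$ and the fact that $\hat v_1$ is increasing in $v_1$. This is the missing idea: by restricting the $v_1$-integration to $(-\infty,V_1(s))$ you exactly cancel the overtaking term that spoils monotonicity of $m(t,X(t))$.

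With this in hand the paper's argument is short: first some preliminary lemmas (order-preservation of characteristics and control of extremal positions/momenta) give $E_1(t,r(t))\to\tfrac12 M$, where $r(t)$ is the position of the maximal-$v_1$ particle. Fix a large $t$ with $E_1(t,r(t))>0.4M$, choose $\epsilon>0$ so that the lower-left phase-space mass at $(r(t)-\epsilon,P_1(t)-\epsilon)$ equals $0.8M$, and launch a characteristic from there. The lower-left monotonicity keeps $E_1(s,X(s))\ge 0.3M$, hence $1-\widehat V_1(s)=O((s-t)^{-2})$ and $R(0)+s-X(s)$ stays bounded; the upper-right monotonicity keeps a fixed positive charge to the right of $X(s)$. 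That charge lives in $[X(s),R(0)+s]$, an interval of bounded length, and your H\"older reduction finishes. No Gronwall, no bootstrap, no accounting for crossings.
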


Since \cite{GPS} showed some time decay for solutions to (\ref{RVPN}), in the case $N=2$, $e_\alpha =(-1)^{\alpha+1}$, and $f_\alpha$ independent of $v_2$, this theorem displays the distinct difference in behavior between monocharged and neutral plasmas, even in a one-dimensional setting.   Hence, the dispersive effects which occur in the neutral case seem to be a stronger or more influential phenomena than the repulsion of the monocharged case.  Additionally, while \cite{BKR} showed that the charge density decays in sup-norm like $t^{-1}$ for the classical, monocharged system (\ref{VP}) with $f$ independent of $v_2$ (and the argument can be extended to include solutions of (\ref{VP})), we find here that the charge density does not decay in any $L^p$ norm for the relativistic, monocharged system (\ref{RVP}).  Thus, we have differing asymptotic behavior depending upon the inclusion or exclusion of relativistic velocity corrections.  This can be contrasted with Horst's discovery \cite{Horst} that solutions to both the classical and relativistic 3D Vlasov-Poisson systems satisfy the same time decay estimates under the assumption of spherical symmetry.\\

For these systems, the time asymptotic behavior of solutions will depend strongly on characteristics.  Since we are first interested in (\ref{RVP}), define the associated characteristics $X(s,t,x,v)$ and $V(s,t,x,v)$ by
\begin{equation}
\label{char} \left. \begin{array}{ccc}
& & \displaystyle \frac{dX}{ds} = \widehat{V}_1 (s) \\
& & \displaystyle \frac{dV_1}{ds} = E_1(s,X(s)) \\
& & V_2(s,t,x,v) = v_2 \\
\\
& & X(t,t,x,v) = x \\
& & V_1(t,t,x,v) = v_1
\end{array} \right \}
\end{equation}
Then, in addition to lower bounds on the charge density, we can also determine a uniform upper bound under a very general assumption.
\begin{theorem} \label{Thm2}
Consider (\ref{RVP}) and assume there is $F_0 \in L^1(\mathbb{R}^2)$
such that\break $f_0(x,v) \leq F_0(v) \ \forall x \in \mathbb{R}, v
\in \mathbb{R}^2$.  Then, $$1 \leq \frac{\partial V_1}{\partial
v_1}(s,t,x,v) \quad \mathrm{for} \ 0\leq s \leq t$$ and $$\rho(t,x)
\leq \int F_0(v) \ dv \quad \mathrm{for} \ t\geq 0, x \in
\mathbb{R}.$$
\end{theorem}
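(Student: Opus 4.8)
The plan is to study the first variation of the characteristic flow \eqref{char} with respect to $v_1$. The one structural input needed is that $f_0\ge 0$ forces $f\ge 0$, hence $\rho\ge 0$, hence $\partial_x E_1(t,x)=\rho(t,x)\ge 0$; that is, the self-consistent field is nondecreasing in $x$. Fix $t\ge 0$, $x\in\mathbb R$, $v\in\mathbb R^2$, write $X(s)=X(s,t,x,v)$, $V_1(s)=V_1(s,t,x,v)$ (and recall $V_2\equiv v_2$), and set
$$\xi(s)=\frac{\partial X}{\partial v_1}(s,t,x,v),\qquad \eta(s)=\frac{\partial V_1}{\partial v_1}(s,t,x,v).$$
Differentiating \eqref{char} in $v_1$ yields the linear system $\xi'(s)=a(s)\,\eta(s)$, $\eta'(s)=b(s)\,\xi(s)$, where $a(s)=\partial_{V_1}\widehat V_1=(1+v_2^2)\big(1+V_1(s)^2+v_2^2\big)^{-3/2}>0$ and $b(s)=\rho\big(s,X(s)\big)\ge 0$, with terminal data $\xi(t)=0$, $\eta(t)=1$ coming from $X(t,t,x,v)=x$, $V_1(t,t,x,v)=v_1$. (Standard well-posedness theory gives that the characteristics are $C^1$ in all arguments, so these variational equations are legitimate.)

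Next I would integrate this system backward from $s=t$: first $\xi(s)=-\int_s^t a(\sigma)\eta(\sigma)\,d\sigma$, and then
$$\eta(s)=1+\int_s^t b(\sigma)\Big(\int_\sigma^t a(\tau)\,\eta(\tau)\,d\tau\Big)\,d\sigma .$$
Because $a>0$ and $b\ge 0$, this identity immediately gives $\eta(s)\ge 1$ whenever $\eta\ge 0$ throughout $[s,t]$. A continuity argument then removes that hypothesis: $\eta(t)=1>0$, so if $\eta$ were negative at some point of $[0,t)$ there would be a largest $s_0<t$ with $\eta(s_0)=0$ and $\eta>0$ on $(s_0,t]$, and evaluating the identity at $s_0$ would force $\eta(s_0)\ge 1$, a contradiction. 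Hence $\eta(s)\ge 1$ for $0\le s\le t$, which is the first claim (and, as a byproduct, $\xi(s)\le 0$ on $[0,t]$).

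For the bound on $\rho$, I would combine the representation $f(t,x,v)=f_0\big(X(0,t,x,v),V_1(0,t,x,v),v_2\big)$ with $f_0\le F_0$ to get
$$\rho(t,x)=\int_{\mathbb R^2} f(t,x,v)\,dv\le \int_{\mathbb R}\!\int_{\mathbb R} F_0\big(V_1(0,t,x,v_1,v_2),v_2\big)\,dv_1\,dv_2 .$$
For each fixed $v_2$ (and $t,x$), the map $v_1\mapsto w_1:=V_1(0,t,x,v_1,v_2)$ has derivative $\eta(0)\ge 1>0$, so it is a strictly increasing $C^1$ diffeomorphism onto an interval $I\subseteq\mathbb R$; changing variables and using $0\le F_0$ (which follows from $0\le f_0\le F_0$) together with $1/\eta(0)\le 1$,
$$\int_{\mathbb R} F_0\big(V_1(0,t,x,v_1,v_2),v_2\big)\,dv_1=\int_I \frac{F_0(w_1,v_2)}{\eta(0)}\,dw_1\le \int_{\mathbb R} F_0(w_1,v_2)\,dw_1 .$$
Integrating in $v_2$ gives $\rho(t,x)\le\int F_0(v)\,dv$, the second claim.

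The heart of the argument, and the only step that is not bookkeeping, is the lower bound $\eta\ge 1$. The delicate point is that the variational equation is driven backward in time from $s=t$, so the signs must be read off in the right order: first that $\xi\le 0$, and then that feeding this back through the term $b=\rho\ge 0$ can only push $\eta$ upward. It is precisely the nonnegativity of $\rho$ — equivalently, the monotonicity in $x$ of the self-consistent field — that makes this feedback sign-definite; this is special to the monocharged problem and has no counterpart for \eqref{RVPN}, where $\rho$ changes sign.
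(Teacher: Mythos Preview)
Your proof is correct and follows essentially the same approach as the paper: both set up the variational system $\xi'=a\eta$, $\eta'=b\xi$ with $a>0$, $b=\rho\ge 0$, terminal data $\xi(t)=0$, $\eta(t)=1$, and run a continuity/bootstrap argument backward from $s=t$ to conclude $\eta\ge 1$, then change variables in $v_1$ to bound $\rho$. The only cosmetic difference is that the paper phrases the bootstrap in terms of $\xi$ (defining $\tau^*$ as the infimum of times with $\partial X/\partial v_1<0$ on $(\tau,t)$ and showing $\tau^*=0$), whereas you phrase it in terms of $\eta$ via an integral identity; the underlying sign mechanism is identical.
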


For each of these systems it is well-known that $\Vert \rho(t) \Vert_1$ is constant due to charge conservation.  Hence, using Theorems \ref{Thm1} and \ref{Thm2} and interpolation with $\Vert \rho(t) \Vert_1$, we may conclude in the case of (\ref{RVP}) that $\Vert \rho(t) \Vert_p$ is $O(1)$ for any $p \in [1,\infty]$.  Finally, we can also sharply determine the asymptotic behavior of particle momenta for large time.
\begin{theorem} \label{Thm3}
Let $f(t,x,v)$ be a solution of (\ref{RVP}) and define $$Q_1(t) = \sup \{ \vert v_1 \vert : \mbox{there are} \ x, v_2 \in \mathbb{R} \ \mbox{s.t.} \ f(t,x,v) \neq 0 \}.$$ Then, there are $T, C_1, C_2 > 0$ such that for $t \geq T$, we have $$ C_1 t \leq Q_1(t) \leq C_2t.$$
\end{theorem}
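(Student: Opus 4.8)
The plan is to prove the upper and lower bounds separately. The upper bound is routine, following from charge conservation; the lower bound is the substantive part, and the extremal-characteristic analysis it requires will be the main obstacle.

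\emph{Upper bound.} Since $f \ge 0$ we have $\rho \ge 0$, and conservation of charge gives $\Vert \rho(t) \Vert_1 = \Vert \rho_0 \Vert_1 =: M$ for all $t$; the representation formula for $E_1$ then yields $\vert E_1(t,x) \vert \le \tfrac12 M$ for all $t,x$. Integrating $dV_1/ds = E_1(s,X(s))$ from \eqref{char} between $0$ and $t$ gives $\vert V_1(t,0,x,v) \vert \le \vert v_1 \vert + \tfrac12 M t$ for every $(x,v) \in \operatorname{supp} f_0$, and taking the supremum over this compact set gives $Q_1(t) \le Q_1(0) + \tfrac12 M t$, hence $Q_1(t) \le C_2 t$ for all $t \ge T$ once $T>0$ is fixed and $C_2$ is large enough.

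\emph{Lower bound.} I would follow the position of the rightmost particle, $x_+(t) := \sup\{X(t,0,x,v) : (x,v) \in \operatorname{supp} f_0\}$, which is attained (compact support, continuous flow) and Lipschitz in $t$ with constant $\le 1$ because $\vert \hat{v}_1 \vert < 1$. The key structural point is that $f \ge 0$ forces $\operatorname{supp}\rho(t,\cdot) \subseteq (-\infty, x_+(t)]$, so $\int_{x_+(t)}^\infty \rho(t,y)\,dy = 0$ and $\int_{-\infty}^{x_+(t)}\rho(t,y)\,dy = M$, whence $E_1(t,x_+(t)) = \tfrac12 M$ for \emph{every} $t$. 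Hence on any interval over which $x_+$ is traced by a single characteristic, that characteristic's $V_1$ grows at the exact rate $\tfrac12 M$; since $v_1 \mapsto \hat{v}_1$ is strictly increasing for fixed (conserved) $v_2$, $x_+'$ is increasing there, and at a time where the realizing characteristic changes the incoming one has the larger relativistic velocity (it is overtaking from behind), so $x_+'$ jumps upward. Thus $x_+'(t)$ is monotone nondecreasing, with a limit $L \le 1$. One then shows $L = 1$ and, quantitatively, $1 - x_+'(t) = O(t^{-2})$ --- equivalently, that the rightmost particle's momentum grows linearly; via $V_1 = \hat{v}_1 (1+v_2^2)^{1/2}(1-\hat{v}_1^{\,2})^{-1/2}$ and the uniform bound $\vert v_2 \vert \le \sup_{\operatorname{supp} f_0}\vert v_2 \vert$, this yields that the rightmost particle has $V_1 \ge C_1 t$ for $t$ large, and since $Q_1(t)$ dominates this value the proof is complete. (The mirror argument on the leftmost particle, where $E_1 = -\tfrac12 M$, gives the analogous linear growth of $-\inf v_1$, so the conclusion also follows from that side.)

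\emph{Main obstacle.} Everything hinges on the last step: showing the rightmost particle's momentum truly grows linearly. In the strictly one-dimensional case this is immediate, since characteristics cannot cross, so a single fixed characteristic realizes $x_+$ for all time and its momentum is exactly $V_1(0) + \tfrac12 M t$. In the one-and-one-half dimensional setting characteristics with equal $v_1$ but different $v_2$ travel at different speeds, so the rightmost particle genuinely changes identity, and its momentum (though not its velocity) can drop at a change. The real work is to rule out that such changes accumulate so as to keep $x_+'(t)$ --- equivalently the rightmost momentum --- bounded away from the light cone: the mechanism is that a particle subject to the constant forward force $\tfrac12 M$ is relentlessly accelerated toward speed $1$, while the conserved, compactly supported $v_2$ caps the velocity loss at each crossing, but turning this into a quantitative estimate is delicate. (A naive virial identity such as $\frac{d}{dt}\iint x v_1 f\,dv\,dx = \iint \frac{v_1^2}{\sqrt{1+\vert v \vert^2}} f\,dv\,dx + \tfrac14\iint\vert y-z\vert\rho(t,y)\rho(t,z)\,dy\,dz$, despite its manifestly nonnegative right-hand side, only produces a constant lower bound on $Q_1$, so it cannot replace the extremal-particle analysis.)
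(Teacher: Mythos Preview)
Your upper bound matches the paper exactly. Your lower-bound strategy is also the paper's: track the rightmost spatial position, where $E_1 = \tfrac12 M$, and infer linear growth of momentum there. But you correctly flag, and do not close, the central gap: in one-and-one-half dimensions the realizing characteristic can change, and the momentum can drop at a change even though the relativistic velocity cannot. Your sketch (``$x_+'$ is monotone, so it has a limit $L$, and one then shows $L=1$'') is not a proof; the heuristic that the $v_2$-cap bounds each momentum drop does not by itself rule out infinitely many drops accumulating to keep the momentum bounded.

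The paper resolves this not by following a single characteristic but by tracking the \emph{function} $p_1(t)$, defined as the maximal $v_1$ among particles at the rightmost position $R(t)$, paired with the artificial choice $p_2=0$ (the smallest possible $|v_2|$). The key device (Lemma~\ref{Lma2}) is a partition argument: subdivide $[t_1,t_2]$ into steps of size $\Delta s$; on each step, the characteristic launched from $(R(s_k),p(s_k))$ and the one terminating at $(R(s_{k+1}),p(s_{k+1}))$ must, by the Intermediate Value Theorem, share a spatial position at some intermediate time $\tau$, and the order-preserving Lemma~\ref{Lma1} (which crucially uses $p_2=0$ so that $|v_2|\ge|p_2|$ for every particle) forces their momenta to be ordered there. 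A second-order Taylor expansion then gives $p_1(s_{k+1})-p_1(s_k)\ge \tfrac12 M\,\Delta s - C(\Delta s)^2$; summing and sending $\Delta s\to 0$ yields $p_1(t_2)-p_1(t_1)\ge \tfrac12 M(t_2-t_1)$ with no identity-change bookkeeping at all. This immediately gives $Q_1(t)\ge p_1(t)\ge p_1(0)+\tfrac12 M t$. Your outline is missing precisely this mechanism (or an equivalent one), and without it the lower bound is not established.
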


To further display differences in behavior between monocharged and neutral plasma, we consider the multi-species problem (\ref{RVPN}).  Since attractive forces among particles are now introduced, one intuitively expects that the particle momenta are asymptotically slowed in comparison to those of (\ref{RVP}).  This is accurate and demonstrated by the following theorem.

\begin{theorem} \label{Thm4}
Let $f_\alpha(t,x,v)$ satisfy (\ref{RVPN}) for $\alpha = 1,...,N$ and define $$Q_1(t) = \sup \{ \vert v_1 \vert : \mbox{there are} \ x, v_2 \in \mathbb{R} \ \mbox{s.t.} \ \sum_\alpha f_\alpha (t,x,v)  \neq 0\}.$$ Then, there is $C > 0$ such that for any $t \geq 0$ we have $$Q_1(t) \leq C\sqrt{1 + t}.$$
\end{theorem}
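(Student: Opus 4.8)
The plan is to combine conservation of total energy with a direct analysis of the characteristic system \eqref{char}, modified for species $\alpha$ by inserting the factor $e_\alpha$. The first step is to prove conservation of
$$\mathcal{E}(t):=\sum_\alpha\iint\sqrt{1+|v|^2}\,f_\alpha(t,x,v)\,dv\,dx+\frac12\int_{\mathbb{R}}E_1(t,x)^2\,dx .$$
Multiplying the $\alpha$-th Vlasov equation by $\sqrt{1+|v|^2}$, integrating, summing over $\alpha$ and using $\partial_{v_1}\sqrt{1+|v|^2}=\hat v_1$ gives $\tfrac{d}{dt}\bigl(\sum_\alpha\iint\sqrt{1+|v|^2}f_\alpha\bigr)=\int E_1\,j_1\,dx$; pairing this with the identity $\partial_t E_1=-j_1$ — which holds because neutrality forces $E_1(t,x)=\int_{-\infty}^x\rho(t,y)\,dy$ and $j_1$ has compact support — yields $\mathcal{E}'(t)=0$. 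Consequently $\|E_1(t)\|_2\le(2\mathcal{E}(0))^{1/2}$ and $\sum_\alpha\iint\sqrt{1+|v|^2}f_\alpha(t)\le\mathcal{E}(0)$ for all $t$; moreover, since transport preserves $\|f_\alpha\|_1$, also $\|E_1(t)\|_\infty\le\|\rho(t)\|_1\le M:=\sum_\alpha|e_\alpha|\|f_{0\alpha}\|_1$.

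Because $|\hat v_1|<1$, the spatial supports of $f_\alpha(t,\cdot,\cdot)$, $\rho(t,\cdot)$ and $E_1(t,\cdot)$ all lie in $[-(R_0+t),R_0+t]$, where $R_0$ bounds the support of the data; combined with the uniform $L^2$ bound, Cauchy--Schwarz gives $\|E_1(t)\|_1\le\sqrt{2(R_0+t)}\,\|E_1(t)\|_2\le C\sqrt{1+t}$. Introduce $\Phi(t,x):=\int_{-\infty}^x E_1(t,y)\,dy$, so $\partial_x\Phi=E_1$ and $\|\Phi(t)\|_\infty\le\|E_1(t)\|_1\le C\sqrt{1+t}$. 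Along a characteristic of species $\alpha$, using $\tfrac{d}{ds}\sqrt{1+|V|^2}=e_\alpha\hat V_1E_1(s,X(s))$ and $\tfrac{d}{ds}\Phi(s,X(s))=\partial_t\Phi(s,X(s))+\hat V_1E_1(s,X(s))$ one finds
$$\frac{d}{ds}\Bigl(\sqrt{1+|V(s)|^2}-e_\alpha\Phi(s,X(s))\Bigr)=-e_\alpha\,\partial_t\Phi(s,X(s)),\qquad \partial_t\Phi(s,x)=-\int_{-\infty}^x j_1(s,y)\,dy,$$
and $|\partial_t\Phi(s,x)|\le\|j_1(s)\|_1\le M$ since $|\hat v_1|\le1$. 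Integrating in $s$ and using $\|\Phi\|_\infty\le C\sqrt{1+t}$ gives, for every characteristic,
$$\sqrt{1+|V(t)|^2}\le C\sqrt{1+t}+\max_\alpha|e_\alpha|\int_0^t\bigl|\partial_t\Phi(s,X(s))\bigr|\,ds ,$$
hence $Q_1(t)\le C\sqrt{1+t}+C\sup_{\mathrm{char}}\int_0^t|\partial_t\Phi(s,X(s))|\,ds$.

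It therefore remains to show $\int_0^t|\partial_t\Phi(s,X(s))|\,ds\le C\sqrt{1+t}$, uniformly over characteristics, and this is the heart of the matter: the pointwise bound $|\partial_t\Phi|\le M$ only gives an $O(t)$ estimate — precisely the (sharp) rate for the monocharged system \eqref{RVP} — so the cancellation in $\sum_\alpha e_\alpha f_\alpha$ coming from neutrality must enter in an essential way. The route I would try is Cauchy--Schwarz in time: it suffices to obtain a uniform bound $\int_0^t|\partial_t\Phi(s,X(s))|^2\,ds\le C$, that is, a space--time (local-smoothing-type) estimate for the current potential $\int_{-\infty}^{X(s)}j_1(s,y)\,dy$ along the flow; equivalently one would like $|\partial_t\Phi(s,X(s))|\lesssim(1+s)^{-1/2}$. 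To produce this I expect one feeds the uniform field-energy bound $\|E_1(t)\|_2\le C$ into a transport/virial identity for a weighted moment such as $\sum_\alpha\iint(x-t\hat v_1)^2f_\alpha\,dx\,dv$, whose time derivative equals $-2t\sum_\alpha e_\alpha\iint E_1(t,x)\,(\partial_{v_1}\hat v_1)(x-t\hat v_1)f_\alpha\,dx\,dv$; after using neutrality this should be controllable by $\|E_1(t)\|_2^2$ and the kinetic energy.

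The main obstacle, and where I expect the real work to lie, is exactly this last step. Unlike the classical system, where $\partial_{v_1}v_1\equiv1$ makes $\sum_\alpha\iint(x-tv_1)^2f_\alpha$ close on the energy by a clean pseudo-conformal computation, here the factor $\partial_{v_1}\hat v_1=\tfrac{1+v_2^2}{(1+|v|^2)^{3/2}}$ degenerates as $|v|\to\infty$, precisely on the fast particles that determine $Q_1(t)$. Controlling this degeneracy — perhaps by passing to the bounded variable $\hat v_1$ in place of $v_1$, or by a separate argument that fast characteristics spend little ``time'' in regions where $j_1$ is large, again via the energy bound — is the crux; everything else above is routine manipulation of the conservation laws.
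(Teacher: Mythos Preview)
Your reduction is sound up to the point you flag: everything hinges on bounding $\int_0^t |\partial_t\Phi(s,X(s))|\,ds$ by $C\sqrt{1+t}$, and you correctly identify this as unresolved. However, the virial route you sketch --- a moment identity for $\sum_\alpha\iint(x-t\hat v_1)^2 f_\alpha\,dx\,dv$ --- is unlikely to close: the degeneracy of $\partial_{v_1}\hat v_1$ at large $|v|$ is precisely where the information about fast characteristics lives, and there is no evident way to recover it from the energy alone. Nor is there a mechanism in sight for the uniform bound $\int_0^t\bigl(\int_{-\infty}^{X(s)}j_1\bigr)^2\,ds\le C$ that Cauchy--Schwarz in time would require; that is a genuine local-smoothing statement not implied by the conservation laws. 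So as written there is a real gap.

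The paper's argument is entirely different and exploits the relativistic structure in a way your approach does not. The key ingredients are: (i) integrating the local energy identity $\partial_t e+\partial_x m=0$ over a forward light cone yields the null-cone bound
\[
\int_0^\infty\Bigl[\sigma_-(s,x+s)+\tfrac12 E_1^2(s,x+s)\Bigr]\,ds\le C,
\]
uniformly in $x$, where $\sigma_-=\int(\sqrt{1+|v|^2}-v_1)\sum_\alpha f_\alpha\,dv$; (ii) a fast characteristic (say $V_1(s)\ge\tfrac12 V_1(t)$ on $[t-\Delta,t]$) stays within $C\Delta/V_1(t)^2$ of the null line $X_c(s)=X(t)+s-t$, because $1-\hat V_1\le C/V_1^2$; (iii) hence $\int_{t-\Delta}^t E_1(s,X(s))\,ds$ splits into $\int_{t-\Delta}^t E_1(s,X_c(s))\,ds\le C\sqrt{\Delta}$ by Cauchy--Schwarz and (i), plus a correction controlled by $\int\sum_\alpha f_\alpha\,dv$ on a strip of width $C\Delta/V_1(t)^2$ around the cone. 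A short lemma gives $\int\sum_\alpha f_\alpha\,dv\le 3\sqrt{\sigma_- k}$ (with $k$ the kinetic energy density), and another Cauchy--Schwarz combined with (i) bounds the strip term by $C\Delta/V_1(t)$. Together with $V_1(t)-V_1(t-\Delta)=\tfrac12 V_1(t)$ this yields $V_1(t)\le C\sqrt{\Delta}\le C\sqrt t$.

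The idea you are missing is that the speed-of-light cutoff forces fast particles to move nearly along null lines, and the null-cone flux of both $E_1^2$ and $\sigma_-$ is uniformly bounded --- this is where neutrality (finite total energy) is actually spent. Your potential $\Phi$ and the antiderivative of $j_1$ never see this null structure, which is why the argument stalls.
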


Next, we turn our attention to the classical, monocharged problem (\ref{VP}).  As opposed to (\ref{RVP}), time decay of the charge density is known from \cite{BKR}, so one may expect faster growth of particle momenta, as well.  However, since the electric field terms are the same, the momenta are shown here to grow at exactly the same rate as in the relativistic case.

\begin{theorem} \label{Thm5}
Let $f(t,x,v)$ be a solution of (\ref{VP}) and define $$Q_1(t) = \sup \{ \vert v_1 \vert : \mbox{there are} \ x, v_2 \in \mathbb{R} \ \mbox{s.t.} \ f(t,x,v) \neq 0 \}.$$  Then, there are $T, C_1, C_2 > 0$ such that for $t \geq T$, we have $$ C_1 t \leq Q_1(t) \leq C_2t.$$
\end{theorem}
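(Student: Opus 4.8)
\medskip
\noindent\textbf{Proof plan.}
The plan is to follow the strategy of Theorem~\ref{Thm3}: the characteristic system for (\ref{VP}) differs from (\ref{char}) only in that $dX/ds = V_1$ replaces $dX/ds = \widehat V_1$, while the momentum law $dV_1/ds = E_1(s,X(s))$ and the estimates on $E_1$ used below are unchanged, so the momentum bounds are obtained in the same way. Set $M := \iint f_0\,dv\,dx$. Since $f_0 \ge 0$ is not identically zero, $M > 0$, and $\rho(t,\cdot) \ge 0$ with $\int \rho(t,x)\,dx = \|\rho(t)\|_1 = M$ for all $t$ by conservation of charge. From the formula for $E_1$ one has $\partial_x E_1 = \rho \ge 0$, so $E_1(t,\cdot)$ is nondecreasing with limits $\mp M/2$ at $x = \mp\infty$; in particular $|E_1(t,x)| \le M/2$ for all $t$, $x$.

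\medskip
\noindent For the upper bound: if $f(t,x,v) \ne 0$ then $v_1 = V_1(t,0,x',v')$ for some $(x',v') \in \mathrm{supp}\,f_0$, so integrating $dV_1/ds = E_1$ gives $|v_1| \le R + \tfrac M2 t$, where $R := \sup\{\,|v_1'| : (x',v') \in \mathrm{supp}\,f_0\,\}$. Hence $Q_1(t) \le R + \tfrac M2 t \le Mt$ once $t \ge 2R/M$, which is the right-hand inequality with $C_2 = M$.

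\medskip
\noindent For the lower bound, let $Z(t) := \max\{\,X(t,0,x,v) : (x,v) \in \mathrm{supp}\,f_0\,\}$ be the right edge of the spatial support of $f(t,\cdot,\cdot)$; the maximum is attained by compactness of $\mathrm{supp}\,f_0$ and continuity of the flow, and $Z$ is continuous (indeed locally Lipschitz). Since $\rho(t,\cdot)$ is supported in $(-\infty, Z(t)]$ and has total mass $M$, the field formula gives $E_1(t,x) = \tfrac M2$ for every $x \ge Z(t)$, in particular $E_1(t,Z(t)) = \tfrac M2$. Thus whichever particle realizes the edge at time $t$ feels the maximal constant force $M/2$. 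Writing $V_1^+(t)$ for the largest $v_1$ among particles located at $x = Z(t)$, I would show that $V_1^+(t) - \tfrac M2 t$ is nondecreasing: on any interval during which a single characteristic $C$ stays at the edge one has $X_C = Z$, so $dV_{1,C}/ds = E_1(s,Z(s)) = \tfrac M2$ there; and when the edge is handed off from $C$ to another characteristic $D$ at a time $\tau$, then $X_C(\tau) = X_D(\tau) = Z(\tau)$ with $X_D \ge X_C$ just after $\tau$, which forces $V_{1,D}(\tau) \ge V_{1,C}(\tau)$, so $V_1^+$ can only jump upward. It follows that $V_1^+(t) \ge V_1^+(0) + \tfrac M2 t \ge -R + \tfrac M2 t$, and since $Q_1(t) \ge V_1^+(t)$ we get $Q_1(t) \ge \tfrac M4 t$ for $t \ge 4R/M$. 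Taking $T = 4R/M$, $C_1 = M/4$, $C_2 = M$ completes the proof.

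\medskip
\noindent The step I expect to be the main obstacle is making the monotonicity of $V_1^+(t) - \tfrac M2 t$ rigorous: one needs that $Z$ is differentiable a.e.\ with $Z' = V_1^+$ away from handoff times, that handoffs can only increase $V_1^+$, and that possible accumulation of handoffs causes no trouble. A clean way to sidestep any delicate analysis of the handoff set is to argue directly with the edge characteristic: fix a time $s$, take a characteristic attaining $V_1^+(s)$, follow it forward, and each time it stops being the edge pass to the characteristic that overtakes it (which is at least as fast at that instant); an induction then yields $V_1^+(t) \ge V_1^+(s) + \tfrac M2 (t-s)$ for all $t \ge s$. None of this uses the form of $dX/ds$, so it is essentially the argument of Theorem~\ref{Thm3}.
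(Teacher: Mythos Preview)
Your upper bound is exactly the paper's. For the lower bound, however, you take a genuinely different route from the paper's proof of Theorem~\ref{Thm5}. The paper does \emph{not} use the edge-characteristic argument for (\ref{VP}); instead it multiplies the Vlasov equation by $v_1 E_1$, integrates, and obtains the identity
\[
\frac{d}{dt}\iint v_1 E_1 f\,dv\,dx \;=\; \int\Bigl(\rho\!\int v_1^2 f\,dv - j_1^2\Bigr)dx \;+\; \int \rho E_1^2\,dx.
\]
The first integrand is nonnegative by Cauchy--Schwarz, and the second equals $\tfrac13\int\partial_x(E_1^3)\,dx = M^3/12$. Integrating in $t$ and using $|E_1|\le M/2$, $\|f(t)\|_1=M$ gives $Q_1(t)\ge Ct$ in two lines. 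The edge machinery (your $Z(t)$, $V_1^+(t)$, which are the paper's $R(t)$, $p_1(t)$) is reserved in the paper for the (\ref{RVP}) lower bound, where the moment identity above is unavailable because the kinetic term becomes $\int \hat v_1 v_1 f\,dv$ and the clean Cauchy--Schwarz step is lost.

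Your approach is nonetheless correct: Lemma~\ref{Lma1} holds for (\ref{VP}) (indeed more simply, since $dX/ds=V_1$ has no $v_2$ dependence), and then Lemma~\ref{Lma2} goes through verbatim to give $p_1(t_2)-p_1(t_1)\ge\tfrac12 M(t_2-t_1)$, which is precisely your monotonicity of $V_1^+(t)-\tfrac{M}{2}t$. You are right to flag the handoff issue as the delicate point; your inductive ``pass to the overtaker'' sketch does not by itself control accumulation of handoffs. The clean fix is the partitioning-plus-Taylor argument of Lemma~\ref{Lma2}: compare $V_1(\tau,s_k,R(s_k),p(s_k))$ and $V_1(\tau,s_{k+1},R(s_{k+1}),p(s_{k+1}))$ at a crossing time $\tau\in[s_k,s_{k+1}]$, sum, and let the mesh go to zero. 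What your route buys is a unified proof of Theorems~\ref{Thm3} and~\ref{Thm5}; what the paper's moment identity buys is brevity and independence from the edge lemmas.
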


Finally, to contrast the result for (\ref{VP}), we may show that particles travel at slower speeds in the neutral case (\ref{VPN}).  In fact, using very different techniques than the proof of Theorem \ref{Thm4}, we obtain the same bound on growth as for the momenta in (\ref{RVPN}).

\begin{theorem} \label{Thm6}
Let $f_\alpha(t,x,v)$ satisfy (\ref{VPN}) for $\alpha = 1,...,N$ and define $$Q_1(t) = \sup \{ \vert v_1 \vert : \mbox{there are} \ x, v_2 \in \mathbb{R} \ \mbox{s.t.} \ \sum_\alpha f_\alpha (t,x,v) \neq 0\}.$$ Then, there is $C > 0$ such that for any $t \geq 0$ we have $$Q_1(t) \leq C\sqrt{1 + t}.$$
\end{theorem}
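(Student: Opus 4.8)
The plan is to exploit conservation of total energy together with a suitably chosen electrostatic potential, tracked along the characteristics of (\ref{VPN}).

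\emph{Step 1: consequences of neutrality and energy conservation.} From (\ref{Neutrality}) one has $\int \rho(t,x)\,dx = 0$, hence $E_1(t,x) = \int_{-\infty}^x \rho(t,y)\,dy$, and since each $f_\alpha(t,\cdot,\cdot)$ is compactly supported the field $E_1(t,\cdot)$ is compactly supported in $x$. I would then verify that the energy
\[
\mathcal{E}(t) = \frac{1}{2}\iint |v|^2 \sum_\alpha f_\alpha(t,x,v)\,dv\,dx + \frac{1}{2}\int E_1(t,x)^2\,dx
\]
is constant: differentiating in $t$, using the Vlasov equations and the identities $\partial_t\rho + \partial_x j_1 = 0$ and $\partial_t E_1 = -j_1$ (with $j_1 = \sum_\alpha e_\alpha \int v_1 f_\alpha\,dv$), the kinetic and field contributions cancel after integration by parts (all justified since $f_\alpha$ is $C^1$, compactly supported, and $E_1$ is $C^1$ with compact $x$-support). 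In particular the kinetic energy stays bounded: $\iint |v|^2 \sum_\alpha f_\alpha\,dv\,dx \le 2\mathcal{E}(0)$ for all $t \ge 0$.

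\emph{Step 2: quantitative bounds on $j_1$, $E_1$, and the potential.} Using $|v_1| \le \tfrac12(1 + |v|^2)$, conservation of mass $\|f_\alpha(t)\|_1 = \|f_{0\alpha}\|_1$, and the kinetic energy bound, one obtains $\|j_1(t)\|_{1} \le C$ uniformly in $t$ (as an $L^1$ norm in $x$); combined with $\partial_t E_1 = -j_1$ this gives $\|E_1(t)\|_{1} \le \|E_1(0)\|_{1} + Ct$. Now introduce $\phi(t,x) = -\int_{-\infty}^x E_1(t,y)\,dy$, so $\partial_x \phi = -E_1$ and $\partial_t \phi(t,x) = \int_{-\infty}^x j_1(t,y)\,dy =: J(t,x)$, with $|J(t,x)| \le \|j_1(t)\|_{1} \le C$ and $|\phi(t,x)| \le \|E_1(t)\|_{1} \le C(1+t)$.

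\emph{Step 3: the differential identity along characteristics.} For a characteristic $(X(s),V_1(s))$ of (\ref{VPN}) carrying species $\alpha$, i.e. $\dot X = V_1$, $\dot V_1 = e_\alpha E_1(s,X(s))$, the cross-terms involving $E_1$ cancel and
\[
\frac{d}{ds}\!\left(\tfrac12 V_1(s)^2 + e_\alpha \phi(s,X(s))\right) = e_\alpha J(s,X(s)).
\]
Integrating from $0$ to $t$ and using $|J|\le C$, $|\phi|\le C(1+t)$, and $|V_1(0)|\le R_0$ (the initial point lies in the compact set $\mathrm{supp}\, f_{0\alpha}$) gives $\tfrac12 V_1(t)^2 \le C'(1+t)$, hence $|V_1(t)|\le C\sqrt{1+t}$; taking the supremum over trajectories with $\sum_\alpha f_\alpha(t,x,v)\neq 0$ yields $Q_1(t)\le C\sqrt{1+t}$. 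The main obstacle is conceptual: unlike the familiar "energy $=$ kinetic $+$ potential, both bounded" situation, the potential $\phi$ here is \emph{not} bounded in time, since spatial spreading of $\rho$ may force $\|E_1(t)\|_{1}$ to grow linearly, so the argument must tolerate a linearly growing potential term and a linearly growing remainder $\int_0^t J$, and succeed precisely because both remain consistent with the target $V_1^2 \lesssim t$. The technical points requiring care are the uniform bound $\|j_1(t)\|_{1}\le C$ (equivalently, uniform boundedness of $J$) extracted from energy conservation, and the exact cancellation of the $E_1$-terms in the identity of Step 3.
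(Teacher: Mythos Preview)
Your argument is correct and takes a genuinely different route from the paper. The paper first establishes the second spatial moment bound $\iint x^2 \sum_\alpha f_\alpha \, dv\,dx \le C(1+t^2)$ via the standard dilation computation, converts this into the pointwise field estimate $|E_1(t,x)| \le C\min(1,(1+t^2)/x^2)$, and then applies Horst's trick of writing $V_1^2(t)-V_1^2(\tau^*) = 2e_\alpha\int_{\tau^*}^t E_1(s,X(s))\,\dot X(s)\,ds$, changing variables to $x$, and integrating the field bound. Your approach bypasses the spatial moment entirely: you use only mass conservation and the kinetic-energy bound from total energy conservation to get $\|j_1(t)\|_{L^1_x}\le C$ and hence $\|E_1(t)\|_{L^1_x}\le C(1+t)$, and then observe that the single-particle energy $\tfrac12 V_1^2 + e_\alpha\phi$ along a characteristic has time derivative $e_\alpha J$, with $|J|\le\|j_1\|_{L^1}$ and $|\phi|\le\|E_1\|_{L^1}$. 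This is more direct for the stated conclusion and avoids both the moment identity and the change-of-variable argument; on the other hand, the paper's route yields the additional pointwise information $|E_1(t,x)|\le C(1+t^2)/x^2$, which your method does not produce. Either way the final bound $Q_1(t)\le C\sqrt{1+t}$ falls out with the same exponent.
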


The next section is devoted to proving Theorems \ref{Thm1}, \ref{Thm2}, and \ref{Thm3}.  The proofs of Theorems \ref{Thm3} and \ref{Thm5} will be combined since they use similar techniques.  Section $3$ then contains the proof of Theorem \ref{Thm4}, while Section $4$ contains the proof of Theorem \ref{Thm6}.  Throughout the paper, ``$C$'' will denote a generic constant which may change from line to line and depend upon initial data, but not on $t,x$, or $v$.  However, constants with subscripts (e.g., ``$C_0$'') will denote the same numerical value.

\section{Behavior of solutions to (\ref{RVP})}

Throughout this section, we will make great use of the monocharge assumption, which implies $\rho \geq 0$ and thus $\partial_x E \geq 0$.  In addition, we will frequently use the quantity $$M := \iint f_0(x,v) dv dx > 0.$$
In order to prove Theorem \ref{Thm1}, we first need a few lemmas.  The following result shows that characteristics in (\ref{char}) display an order-preserving property due to the monocharge assumption.
\begin{lemma} \label{Lma1}
Let $x \leq x^*$, $v_1 \leq v_1^*$, $\vert v_2 \vert \geq \vert v_2^* \vert$, and $0 \leq t_1 \leq t_2$.  Then, $$X(t_2,t_1,x,v) \leq X(t_2,t_1,x^*,v^*)$$ and $$V_1(t_2,t_1,x,v) \leq V_1(t_2,t_1,x^*,v^*).$$  Also, $x < x^*$ and $t_1 < t_2$ implies $\displaystyle X(t_2,t_1,x,v) < X(t_2,t_1,x^*,v^*)$, and similarly for characteristic momenta $V_1$ if $v_1 < v_1^*$ and $t_1 < t_2$.
\end{lemma}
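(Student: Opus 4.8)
The plan is to establish both monotonicity statements by a Gronwall-type argument applied to the difference of two characteristic curves. Fix the data $x \leq x^*$, $v_1 \leq v_1^*$, $|v_2| \geq |v_2^*|$ and the initial time $t_1$, and for $s \geq t_1$ write $X(s) = X(s,t_1,x,v)$, $X^*(s) = X(s,t_1,x^*,v^*)$, and similarly for $V_1, V_1^*$. Set $D(s) = X^*(s) - X(s)$ and $W(s) = V_1^*(s) - V_1(s)$. We want to show $D(s) \geq 0$ and $W(s) \geq 0$ for $s \geq t_1$, with strict inequality once $s > t_1$ when the spatial (resp. velocity) data are strictly ordered.

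First I would record the differential system these quantities satisfy. From \eqref{char},
\[
\frac{dD}{ds} = \widehat{V^*_1}(s) - \widehat{V_1}(s), \qquad \frac{dW}{ds} = E_1(s,X^*(s)) - E_1(s,X(s)).
\]
The key structural input is the monocharge assumption: since $\rho \geq 0$, the field $E_1(s,\cdot)$ is nondecreasing in $x$, so $\partial_x E_1 \geq 0$ wherever it is defined; hence the sign of $\frac{dW}{ds}$ agrees with the sign of $D(s)$. Likewise, because $r \mapsto r/\sqrt{1+r_1^2+r_2^2}$ is increasing in its first slot (here the second momentum component is frozen: $V_2 \equiv v_2$, $V_2^* \equiv v_2^*$, but the relativistic factor also involves $v_2$ versus $v_2^*$), I need to be slightly careful: write $\widehat{V^*_1} - \widehat{V_1} = \bigl(\widehat{V^*_1} - g(V_1^*,v_2^*)\bigr) + \cdots$; more cleanly, note $\widehat{w}_1 = w_1/\sqrt{1+w_1^2+v_2^2}$ is increasing in $w_1$ for fixed $v_2$, and for fixed $w_1$ of a given sign the map is monotone in $v_2^2$. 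The hypothesis $|v_2| \geq |v_2^*|$ is exactly what is needed so that the $v_2$-discrepancy pushes $\widehat{V_1^*}$ up relative to $\widehat{V_1}$ rather than down — this is the reason that particular inequality appears in the statement, and pinning down its role is the one genuinely delicate point.

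With those sign facts in hand the argument is a standard coupled ODE comparison: on any interval where $D \geq 0$ we get $W' \geq 0$, and since $W(t_1) = v_1^* - v_1 \geq 0$ this keeps $W \geq 0$; and $W \geq 0$ together with the monotonicity of $\widehat{\cdot}$ (plus the $v_2$ sign condition) gives $D' \geq 0$, which with $D(t_1) = x^* - x \geq 0$ keeps $D \geq 0$. To make this rigorous without circularity I would either set up a single Gronwall inequality for $D(s) + W(s)$ using the mean-value theorem (the field $E_1$ need not be $C^1$ in $x$, but it is Lipschitz in $x$ on the support, or one can use that $E_1$ is nondecreasing directly via $E_1(s,X^*) - E_1(s,X) \geq 0$ whenever $X^* \geq X$), or argue by contradiction at the first time $s_0$ where $\min(D,W)$ would hit $0$ from above. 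For the strict statements, observe that if $x < x^*$ then $D(t_1) > 0$, and since $D$ is continuous it stays positive on a maximal interval; on that interval $W' \geq 0$ so $W$ cannot become negative, hence $D' \geq 0$ and $D$ never returns to $0$ — so $D(s) > 0$ for all $s > t_1$, and then the integral $\int_{t_1}^{s} (\widehat{V_1^*} - \widehat{V_1})\,d\sigma$ argument upgrades $W$ to strictly positive as soon as $V_1^* \neq V_1$ is forced, which happens for $s > t_1$. The symmetric reasoning handles the case $v_1 < v_1^*$, $t_1 < t_2$ for $W$.

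The main obstacle I anticipate is purely the bookkeeping around the relativistic velocity with two momentum components: one must verify carefully that $\widehat{V_1^*}(s) - \widehat{V_1}(s) \geq 0$ whenever $V_1^*(s) \geq V_1(s)$ and $|v_2| \geq |v_2^*|$, since $\widehat{V_1}$ depends on $v_2$ as well. Splitting the difference into a $V_1$-monotone part at fixed $v_2^*$ plus a $v_2$-monotone part at fixed $V_1^*$ (or $V_1$) resolves it, but the sign of the $v_2$-part flips with the sign of $V_1$, which is why the hypothesis is stated with $|v_2|$. Everything else is a textbook Gronwall/first-time-of-crossing argument.
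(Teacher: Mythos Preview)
Your proposal matches the paper's proof: the paper defines the maximal time $\tau$ on which $X \leq X^*$ holds, uses $\partial_x E_1 \geq 0$ on $[t_1,\tau]$ to get $V_1^* - V_1 \geq v_1^* - v_1 \geq 0$, and then uses monotonicity of $\hat v_1$ in $v_1$ together with exactly the two-step $v_2$ comparison you describe (first replace $V_1^*$ by $V_1$ at fixed $v_2^*$, then pass from $v_2^*$ to $v_2$) to obtain $\dot X^* \geq \dot X$ and hence $X^* - X \geq x^* - x$, forcing $\tau = t_2$. The paper also argues the strict case $x < x^*$ first and recovers the non-strict case by continuity in the initial data, just as you outline.
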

\begin{proof}
Suppose $x < x^*$, $t_1 < t_2$, and define $$\tau = \sup \{t \in [t_1,t_2] : X(s,t_1,x,v) \leq X(s,t_1,x^*,v^*) \ \forall s \in [t_1,t] \}.$$ Then, for $s \in [t_1,\tau]$, since $\rho \geq 0$, $E_1(t,\cdot)$ is increasing and
\begin{eqnarray*}
\frac{d}{ds} \left [V_1(s,t_1,x^*,v^*) - V_1(s,t_1,x,v) \right ] & = & E_1(s,X(s,t_1,x^*,v^*))\\ & & - \ E_1(s,X(s,t_1,x,v))\\
& \geq & 0.
\end{eqnarray*}
Thus, $$V_1(s,t_1,x^*,v^*) - V_1(s,t_1,x,v) \geq v_1^* - v_1 \geq 0$$ and since $\hat{v}_1=\frac{v_1}{\sqrt{1+\vert v \vert^2}}$ is increasing as a function of $v_1$, we find
\begin{eqnarray*}
\dot{X}(s,t_1,x^*,v^*) & = & \frac{V_1(s,t_1,x^*,v^*)}{\sqrt{1 + V_1(s,t_1,x^*,v^*)^2 + (v_2^*)^2}}\\
& \geq & \frac{V(s,t_1,x,v)}{\sqrt{1 + V_1(s,t_1,x,v)^2 + (v_2^*)^2}}\\
& \geq & \frac{V(s,t_1,x,v)}{\sqrt{1 + V_1(s,t_1,x,v)^2 + v_2^2}}\\
& = & \dot{X}(s,t_1,x,v).
\end{eqnarray*}
Hence, $$X(s,t_1,x^*,v^*) - X(s,t_1,x,v) \geq x^* - x > 0$$ and it follows that $\tau = t_2$.  A similar argument can be used to show the conclusion for characteristic momenta when $v_1 < v_1^*$.  The cases $x^*=x$ or $v^*=v$ follow by continuity with respect to initial conditions.
\end{proof}

We will be concerned with extremal values of position and momentum on the support of $f$ and will make great use of Lemma \ref{Lma1}, so define the quantities
$$W_2 = \sup \{ \vert  v_2 \vert : \exists x, v_1 \in \mathbb{R} \ \mbox{s.t.} \ f_0(x,v) \neq 0 \},$$
$$ s(t) = \overline{\{(x,v_1) \in \mathbb{R}^2: \exists v_2 \in \mathbb{R} \ \mbox{s.t.} \ f(t,x,v) \neq 0 \} },$$
and
$$ \mS(t) =  s(t) \times [-W_2,W_2].$$
Next, we define the largest position and $v_1$ momentum in this set, so let
$$ P_1(t) = \max \{v_1 : \exists x, v_2 \in \mathbb{R} \ \mbox{s.t.} \ (x,v) \in \mS(t) \},$$
$$ R(t) = \max \{x : \exists v \in \mathbb{R}^2 \ \mbox{s.t.} \ (x,v) \in \mS(t) \},$$
and
$$ p_1(t) = \max \{v_1 : \exists v_2 \in \mathbb{R} \ \mbox{s.t.} \ (R(t),v) \in \mS(t) \}.$$
From (\ref{char}) we know that $V_2(s,t,x,v_1,v_2) = v_2$ for any choice of $s,t,x$, and $v_1$.  Thus, the $v_2$-support of $f(t,x,v)$ is constant in time. Hence, let $p_2 = 0$ so that $\vert v_2 \vert \geq \vert p_2 \vert$ for every $v_2$ with $(x,v) \in \mS(t)$.  For the sake of notation, we will write $P_2(t) = P_2 = p_2(t) = p_2 $ for every $t \geq 0$.  Then, from the momentum functions, we define the vectors $P(t) = (P_1(t),P_2(t))$ and $p(t) = (p_1(t),p_2(t))$, and finally the position
$$ r(t) = \max \{x : (x,P(t)) \in \mS(t) \}.$$
Notice that these definitions concern the largest positions and $v_1$ momenta, in addition to the smallest values of $\vert v_2 \vert$ on the support of $f$, and thus, will allow us to utilize Lemma \ref{Lma1}. For intuitive purposes, these quantities are illustrated roughly in Figure $1$ below, which suppresses the dependence on $v_2$.

\begin{figure}[tp]
\begin{center}
  \includegraphics[width=4.5in]{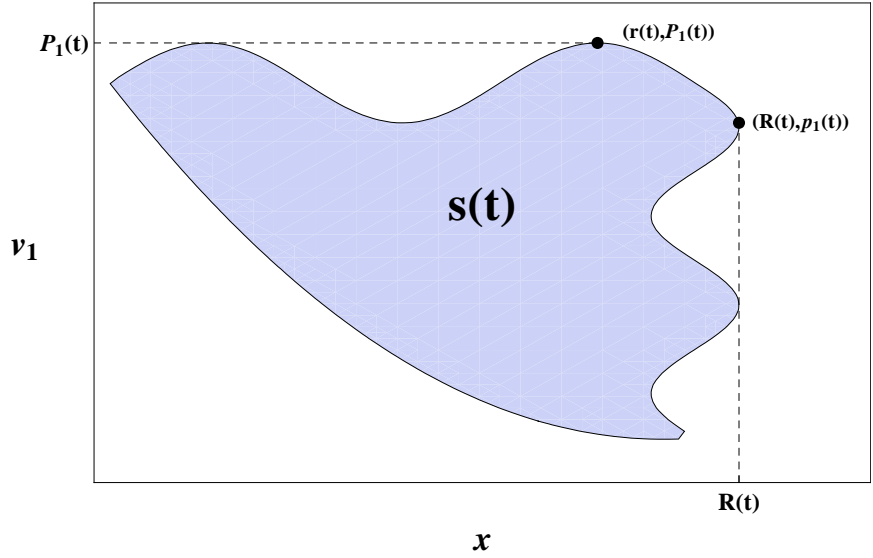}\\
  \caption{Maximal positions and momenta: An example of the support $s(t)$.}\label{AIMS}
  \end{center}
\end{figure}
\noi Since the field $E_1$ only attains values in $[-\frac{1}{2}M,\frac{1}{2}M]$, we have the following lemma.

\begin{lemma} \label{Lma2}
For $t_2 > t_1 \geq 0$,
\begin{equation}
\label{p1}
p_1(t_2) - p_1(t_1) \geq \frac{1}{2}M(t_2 - t_1)
\end{equation}
and $$R(t_2) - R(t_1) = \int_{t_1}^{t_2} \hat{p}_1(s) \ ds.$$
\end{lemma}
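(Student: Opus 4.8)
The plan is to track the right-most point of the support of $f$ and the particle of maximal $v_1$-momentum sitting at that point, using the characteristic ODEs in (\ref{char}) together with the order-preserving property of Lemma \ref{Lma1}. First I would show that the point $R(t)$ is actually carried by a characteristic: since $f$ is transported along characteristics, the support $\mS(t)$ is the image of $\mS(0)$ under the characteristic flow, and by Lemma \ref{Lma1} the characteristic starting at the right-most, largest-$v_1$, smallest-$|v_2|$ point of the initial support dominates all others in both $X$ and $V_1$ for all later times. Consequently $R(t) = X(t,0,\cdot)$ and $p_1(t) = V_1(t,0,\cdot)$ evaluated along that single characteristic (with $v_2$-value $p_2 = 0$). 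In particular $\dot R(t) = \widehat{V}_1(t) = \hat p_1(t)$ along this characteristic, which upon integration from $t_1$ to $t_2$ gives the second identity $R(t_2) - R(t_1) = \int_{t_1}^{t_2} \hat p_1(s)\,ds$.

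For the first inequality, I would use $\dot V_1(s) = E_1(s, X(s))$ along the same characteristic. Since $f \geq 0$ in the monocharged case, $\rho \geq 0$, so $E_1(t,x) = \tfrac12\big(\int_{-\infty}^x \rho\,dy - \int_x^\infty \rho\,dy\big)$ is nondecreasing in $x$ and bounded in absolute value by $\tfrac12 \int_{\mathbb R}\rho(t,y)\,dy = \tfrac12 M$ (using charge conservation $\|\rho(t)\|_1 = M$). The key observation is that $X(s) = R(s)$ is the right-most point of the support, so for $y > X(s)$ we have $\rho(s,y) = 0$; hence $\int_{X(s)}^\infty \rho(s,y)\,dy = 0$ and therefore $E_1(s, X(s)) = \tfrac12 \int_{-\infty}^{X(s)} \rho(s,y)\,dy = \tfrac12 \int_{\mathbb R}\rho(s,y)\,dy = \tfrac12 M$ exactly. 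Integrating $\dot p_1(s) = \tfrac12 M$ from $t_1$ to $t_2$ yields $p_1(t_2) - p_1(t_1) = \tfrac12 M(t_2 - t_1)$, which is even stronger than the claimed inequality (\ref{p1}); presumably the inequality is stated because $p_1$ need only be compared against this extremal characteristic rather than equal it, or to allow for the case where the sup is not attained, but the field bound $E_1 \le \tfrac12 M$ alone already gives $\dot p_1 \le \tfrac12 M$ in one direction and the support argument gives the reverse.

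The main obstacle I anticipate is the bookkeeping needed to justify that the relevant maxima are attained along a single, well-defined characteristic and that this characteristic is precisely the one realizing $R$ and $p_1$ simultaneously — this is where Lemma \ref{Lma1} must be invoked carefully, matching up the hypotheses $x \le x^*$, $v_1 \le v_1^*$, $|v_2| \ge |v_2^*|$ with the definitions of $R(t)$, $P_1(t)$, $p_1(t)$, and the choice $p_2 = 0$. A secondary subtlety is making the statement ``$\rho(s,y) = 0$ for $y > R(s)$'' rigorous given that $s(t)$ is defined as the closure of the support; on the open complement of the closed support $f$ vanishes, so $\rho$ vanishes there, and by continuity $\rho(s, R(s))$ contributes nothing to the integral, so $\int_{R(s)}^\infty \rho(s,y)\,dy = 0$ holds as needed. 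Once these structural points are in place, both conclusions follow from elementary integration of the characteristic ODEs.
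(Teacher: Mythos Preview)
Your argument rests on the claim that a single characteristic, launched from $(R(0), p_1(0), 0)$, realizes both $R(t)$ and $p_1(t)$ for all later $t$. This is not true in general, and Lemma~\ref{Lma1} does not yield it. That lemma requires all three hypotheses $x \le x^*$, $v_1 \le v_1^*$, $|v_2| \ge |v_2^*|$ simultaneously; for an arbitrary $(x,v) \in \mS(0)$ you certainly have $x \le R(0)$ and $|v_2| \ge 0$, but you have no control on $v_1$: the quantity $p_1(0)$ is only the maximal $v_1$ \emph{at} $x = R(0)$, not the global maximum $P_1(0)$. Concretely, if $s(0)$ contains both $(1,0)$ and $(0,100)$, then $R(0)=1$ and $p_1(0)=0$; the characteristic through $(1,0,0)$ starts with $\hat v_1 = 0$, while the one through $(0,100,0)$ travels at nearly unit speed and eventually overtakes it. After the overtaking, $(R(t), p_1(t))$ is carried by the second characteristic, not the first, and $p_1$ jumps upward at that instant. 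This jump is exactly why (\ref{p1}) is an inequality rather than the equality your argument would produce; what you call ``bookkeeping'' is in fact the essential difficulty.

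The paper circumvents this by a partition argument. On each subinterval $[s_k,s_{k+1}]$ one compares the characteristic through $(R(s_k),p(s_k))$ with the one through $(R(s_{k+1}),p(s_{k+1}))$; since each is in $\mS$ at both endpoints, their $X$-components must cross at some $\tau \in [s_k,s_{k+1}]$ by the Intermediate Value Theorem, and Lemma~\ref{Lma1} (applied forward from $\tau$) then orders their $V_1$-components at $\tau$. A Taylor expansion, using your correct observation that $E_1(s_k,R(s_k)) = \tfrac12 M$, gives $p_1(s_{k+1}) - p_1(s_k) \ge \tfrac12 M(s_{k+1}-s_k) - C(\Delta s)^2$, and summing over $k$ and letting the mesh vanish yields (\ref{p1}). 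The identity for $R$ is obtained by an analogous two-sided estimate on the difference quotients. So the ingredients you identify (the value of $E_1$ at the right edge, Lemma~\ref{Lma1}) are the right ones, but they must be applied locally on a partition rather than globally along a single trajectory.
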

\begin{proof}
Let $(X(t),V(t))$ be characteristics of $f$ as in (\ref{char}).  Partition $[t_1,t_2]$ into $n$ subintervals $t_1 = s_0 < s_1 < \cdots < s_n = t_2$ and let $$s_{k+1} - s_k = \Delta s = \frac{t_2 - t_1}{n}$$ for all $k = 0,..., n-1.$  Then, for any $k$
\begin{equation}
\label{one} X(s_{k+1}, s_{k+1}, R(s_{k+1}),p(s_{k+1})) = R(s_{k+1}) \geq X(s_{k+1}, s_k, R(s_k),p(s_k))
\end{equation}
and hence the function $\xi(t) := X(t, s_{k+1}, R(s_{k+1}),p(s_{k+1})) - X(t,s_k, R(s_k),p(s_k))$ satisfies $\xi(s_{k+1}) \geq 0$.  Similarly,
$$X(s_k, s_{k+1}, R(s_{k+1}),p(s_{k+1})) \leq R(s_k) = X(s_k,s_k, R(s_k),p(s_k))$$ and thus $\xi(s_k) \leq 0$.  So, by the Intermediate Value Theorem there exists $\tau \in [s_k, s_{k+1}]$ such that $\xi(\tau) = 0$ and $$ X(\tau, s_{k+1}, R(s_{k+1}),p(s_{k+1})) =  X(\tau, s_k, R(s_k),p(s_k)).$$  If we suppose $$V_1(\tau, s_k, R(s_k), p(s_k)) >  V_1(\tau, s_{k+1}, R(s_{k+1}),p(s_{k+1}))$$ then by Lemma \ref{Lma1} and since $$\vert V_2(\tau, s_k, R(s_k), p(s_k)) \vert = \vert p_2(s_k) \vert = \vert p_2(s_{k+1}) \vert = \vert V_2(\tau,s_{k+1}, R(s_{k+1}),p(s_{k+1})) \vert $$ we would have $$X(s, s_k, R(s_k), p(s_k)) >  X(s, s_{k+1}, R(s_{k+1}),p(s_{k+1}))$$ for all $s > \tau$, which contradicts (\ref{one}).  Hence, we find $$V_1(\tau, s_k, R(s_k), p(s_k)) \leq  V_1(\tau, s_{k+1}, R(s_{k+1}),p(s_{k+1})).$$ By Taylor's theorem,
\begin{eqnarray*}
V_1(\tau, s_k, R(s_k), p(s_k)) & \geq & p_1(s_k) + E_1(s_k, R(s_k))(\tau - s_k) - C (\Delta s)^2 \\
& = & p_1(s_k) + \frac{1}{2}M(\tau - s_k) - C (\Delta s)^2
\end{eqnarray*}
and
\begin{eqnarray*}
V_1(\tau, s_{k+1}, R(s_{k+1}),p(s_{k+1})) & \leq & p_1(s_{k+1}) + E_1(s_{k+1}, R(s_{k+1}))(\tau - s_{k+1})\\
& & + \ C (\Delta s)^2 \\
& = & p_1(s_{k+1}) + \frac{1}{2}M(\tau - s_{k+1}) + C (\Delta s)^2.
\end{eqnarray*}
Hence,
\begin{eqnarray*}
0 & \leq & V_1(\tau, s_{k+1}, R(s_{k+1}),p(s_{k+1})) - V_1(\tau, s_k, R(s_k), p(s_k))\\
& \leq & p_1(s_{k+1}) + \frac{1}{2}M(\tau - s_{k+1}) + C (\Delta s)^2\\
& & - \ \left ( p_1(s_k) + \frac{1}{2}M(\tau - s_k) - C (\Delta s)^2 \right ) \\
& = & p_1(s_{k+1}) - p_1(s_k) - \frac{1}{2}M(s_{k+1} - s_k) + C (\Delta s)^2
\end{eqnarray*}
and we find
\begin{eqnarray*}
p_1(t_2) - p_1(t_1) & = & \sum_{k=0}^{n-1} \left [ p_1(s_{k+1}) - p_1(s_k) \right ] \\
& \geq & \sum_{k=0}^{n-1} \left [ \frac{1}{2}M(s_{k+1} - s_k) - C (\Delta s)^2 \right ] \\
& = & \frac{1}{2}M(t_2 - t_1) - C n (\Delta s)^2.
\end{eqnarray*}
This holds for any $n \in \mathbb{N}$, and thus it follows that $$p_1(t_2) - p_1(t_1) \geq \frac{1}{2}M(t_2 - t_1).$$ In addition, $p_1$ and $\hat{p}_1$ are both increasing and hence integrable.  Next,
\begin{eqnarray*}
\lefteqn{\frac{R(s_{k+1}) - R(s_k)}{s_{k+1} - s_k} - \hat{p}_1(s_k)} \\ & & \geq \frac{X(s_{k+1}, s_k, R(s_k), p(s_k)) - R(s_k)}{s_{k+1} - s_k} - \hat{p}_1(s_k) \\
& & = \frac{X(s_{k+1}, s_k, R(s_k), p(s_k)) - X(s_k, s_k, R(s_k), p(s_k))}{s_{k+1} - s_k}\\
& & \qquad  - \ \dot{X}(s_k, s_k, R(s_k), p(s_k)) \\
& & \geq - C \Delta s
\end{eqnarray*}
and similarly
\begin{eqnarray*}
\lefteqn{\frac{R(s_{k+1}) - R(s_k)}{s_{k+1} - s_k} - \hat{p}_1(s_{k+1})}\\ & & \leq  \frac{R(s_{k+1}) - X(s_k, s_{k+1}, R(s_{k+1}), p(s_{k+1}))}{s_{k+1} - s_k} - \hat{p}_1(s_{k+1}) \\
& & = \frac{X(s_{k+1}, s_{k+1}, R(s_{k+1}), p(s_{k+1})) - X(s_k, s_{k+1}, R(s_{k+1}), p(s_{k+1}))}{s_{k+1} - s_k}\\
& & \qquad - \ \dot{X}(s_{k+1}, s_{k+1}, R(s_{k+1}), p(s_{k+1})) \\
& & \leq C \Delta s.
\end{eqnarray*}
Multiplying these inequalities by $s_{k+1} - s_k$ and summing over $k = 0, ..., n-1$, we find
$$ R(t_2) - R(t_1) - \sum_{k=0}^{n-1} \hat{p}_1(s_k)(s_{k+1} - s_k) \geq - C(t_2 - t_1) \Delta s$$
and
$$ R(t_2) - R(t_1) - \sum_{k=0}^{n-1} \hat{p}_1(s_{k+1})(s_{k+1} - s_k) \leq C(t_2 - t_1) \Delta s.$$
Noting the missing $t_2$ term in the sum of the first inequality and the missing $t_1$ term in the second, we find  $$\left \vert R(t_2) - R(t_1) - \sum_{k=0}^{n-1} \hat{p}_1(s_{k+1})(s_{k+1} - s_k) \right \vert \leq C \Delta s + \vert \hat{p}_1(t_1) \vert \Delta s + \vert \hat{p}_1(t_2) \vert \Delta s.$$  Finally, this holds for any $n \in \mathbb{N}$, and it follows that $$R(t_2) - R(t_1) = \int_{t_1}^{t_2} \hat{p}_1(s) \ ds.$$
\end{proof}

\begin{remark}
The result (\ref{p1}) yields
\begin{equation}
\label{P1}
P_1(t) \geq p_1(t) \geq p_1(0) + \frac{1}{2}Mt.
\end{equation}
\end{remark}

\noi We may proceed in a similar manner to determine the behavior of $r$ and $P_1$.

\begin{lemma} \label{Lma3}
For $t_2 \geq t_1 \geq 0$, we have
\begin{equation}
\label{rstar} r(t_2) - r(t_1) \geq \int_{t_1}^{t_2} \widehat{P}_1(s) \ ds,
\end{equation}
\begin{equation}
\label{Einc}
E_1(t_2, r(t_2)) \geq E_1(t_1, r(t_1)),
\end{equation}
and
\begin{equation}
\label{Pstar} P_1(t_2) - P_1(t_1) = \int_{t_1}^{t_2} E_1(s, r(s)) \ ds.
\end{equation}
\end{lemma}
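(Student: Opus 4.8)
The plan is to mirror the partition-and-Taylor scheme of Lemma~\ref{Lma2}, with one new ingredient that I will isolate first: the order-preserving estimate
\[
 X(t_2,t_1,r(t_1),P(t_1)) \le r(t_2), \qquad 0 \le t_1 \le t_2 ,
\]
asserting that the forward image of the rightmost maximal-$v_1$ point never overtakes the rightmost maximal-$v_1$ point. Writing $X^{\ast}=X(t_2,t_1,r(t_1),P(t_1))$ and $V^{\ast}=V_1(t_2,t_1,r(t_1),P(t_1))$, the transported point $(X^{\ast},V^{\ast},0)$ lies in $\mS(t_2)$ (exactly as $X(s_{k+1},s_k,R(s_k),p(s_k))\in s(s_{k+1})$ is used in Lemma~\ref{Lma2}), so $V^{\ast}\le P_1(t_2)$. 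If $V^{\ast}=P_1(t_2)$ the bound is immediate from the definition of $r(t_2)$. If $V^{\ast}<P_1(t_2)$, suppose $X^{\ast}>r(t_2)$ and trace the point realizing $(r(t_2),P(t_2))$ back to time $t_1$, obtaining $(a,b,0)\in\mS(t_1)$ with $b\le P_1(t_1)$; Lemma~\ref{Lma1} applied forward to the momenta forces $a>r(t_1)$, and then the definition of $r(t_1)$ forces $b<P_1(t_1)$. Thus the characteristic $A$ through $(r(t_1),P(t_1))$ starts strictly left of and strictly faster than the characteristic $B$ through $(a,b,0)$, yet ends strictly right of and strictly slower than $B$, so their positions cross; at the first crossing time $\sigma^{\ast}$ one has $X_A(\sigma^{\ast})=X_B(\sigma^{\ast})$ and $V_{1,B}(\sigma^{\ast})\le V_{1,A}(\sigma^{\ast})$ (the overtaken particle is the slower one there), and Lemma~\ref{Lma1} applied from $\sigma^{\ast}$ forward then gives $P_1(t_2)=V_{1,B}(t_2)\le V_{1,A}(t_2)=V^{\ast}$, a contradiction. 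I expect this estimate to be the main obstacle: not for any depth, but because the careful tracking of strict versus non-strict inequalities and of the roles of $A$ and $B$ is where the argument can go wrong.

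Granting this, (\ref{rstar}) follows as the $R$-identity did in Lemma~\ref{Lma2}. Partition $[t_1,t_2]$ into $n$ equal subintervals; since $|\ddot X|\le\tfrac12 M$ along any characteristic ($|E_1|\le\tfrac12 M$, and $V_2$ is frozen), Taylor's theorem gives $X(s_{k+1},s_k,r(s_k),P(s_k))=r(s_k)+\widehat P_1(s_k)\,\Delta s+O((\Delta s)^2)$, while the order-preserving estimate on $[s_k,s_{k+1}]$ gives $r(s_{k+1})\ge X(s_{k+1},s_k,r(s_k),P(s_k))$. Summing over $k$ and letting $n\to\infty$ ($\widehat P_1$ is bounded and, $P_1$ being continuous in $t$, Riemann integrable) yields $r(t_2)-r(t_1)\ge\int_{t_1}^{t_2}\widehat P_1(s)\,ds$.

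For (\ref{Einc}) I would pass to the primitive representation $E_1(t,x)=\int_{-\infty}^x\rho(t,y)\,dy-\tfrac12 M$, valid since $\int_{\mathbb{R}}\rho(t,\cdot)=M$. The characteristic flow is measure preserving (the field is independent of $v$) and transports $f$, so Lemma~\ref{Lma1} shows every point of the support of $f(t_1)$ whose first coordinate is $\le r(t_1)$ is carried at time $t_2$ to a point with first coordinate $\le X^{\ast}$; hence $\int_{-\infty}^{X^{\ast}}\rho(t_2,\cdot)\ge\int_{-\infty}^{r(t_1)}\rho(t_1,\cdot)$, that is, $E_1(t_2,X^{\ast})\ge E_1(t_1,r(t_1))$. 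Combining with $X^{\ast}\le r(t_2)$ from the order-preserving estimate and the monotonicity of $E_1(t_2,\cdot)$ (as $\partial_x E_1=\rho\ge0$) gives $E_1(t_2,r(t_2))\ge E_1(t_2,X^{\ast})\ge E_1(t_1,r(t_1))$.

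Finally, (\ref{Pstar}) is obtained by running the partition argument on momenta. On $[s_k,s_{k+1}]$ let $A$ be the characteristic through $(r(s_{k+1}),P(s_{k+1}))$ at time $s_{k+1}$ and $B$ that through $(r(s_k),P(s_k))$ at time $s_k$; since their time-$s_{k+1}$ and time-$s_k$ images lie in $\mS(s_{k+1})$ and $\mS(s_k)$, the function $V_{1,A}(s)-V_{1,B}(s)$ is $\ge0$ at $s_{k+1}$ and $\le0$ at $s_k$, so by the Intermediate Value Theorem it vanishes at some $\tau\in[s_k,s_{k+1}]$. Writing
\[
 P_1(s_{k+1})-P_1(s_k)=\bigl(V_{1,A}(\tau)-V_{1,B}(\tau)\bigr)+\int_\tau^{s_{k+1}}E_1(\sigma,X_A(\sigma))\,d\sigma+\int_{s_k}^{\tau}E_1(\sigma,X_B(\sigma))\,d\sigma ,
\]
and using $\bigl|\tfrac{d}{ds}E_1(s,X(s))\bigr|\le\|j_1(s)\|_\infty+\|\rho(s)\|_\infty\le C$ — here $\|\rho(t)\|_\infty$ is bounded uniformly in $t$ by Theorem~\ref{Thm2}, which applies because a $C^1_c$ datum admits an $L^1$ upper envelope in $v$ — a Taylor expansion of the two integrals about $s_{k+1}$ and $s_k$ presents $P_1(s_{k+1})-P_1(s_k)$ as a convex combination of $E_1(s_k,r(s_k))\,\Delta s$ and $E_1(s_{k+1},r(s_{k+1}))\,\Delta s$ up to $O((\Delta s)^2)$. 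Since $s\mapsto E_1(s,r(s))$ is nondecreasing by (\ref{Einc}), summing over $k$ traps $P_1(t_2)-P_1(t_1)$ between the left and right Riemann sums of this integrand (plus an $O(\Delta s)$ error), and letting $n\to\infty$ gives $P_1(t_2)-P_1(t_1)=\int_{t_1}^{t_2}E_1(s,r(s))\,ds$.
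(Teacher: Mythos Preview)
Your proof is correct, but it organizes the argument differently from the paper and introduces one genuinely new ingredient. The paper never isolates the inequality $X(t_2,t_1,r(t_1),P(t_1))\le r(t_2)$; instead, for (\ref{rstar}) it applies the IVT to the \emph{momenta} of the two characteristics on each subinterval to find a time $\tau$ where $V_{1,A}(\tau)=V_{1,B}(\tau)$, then invokes Lemma~\ref{Lma1} at $\tau$ to order the positions and Taylor-expands both characteristics about $\tau$. Your order-preserving estimate (whose proof by contradiction via a first-crossing-time argument is clean and correct) lets you skip the IVT here and Taylor-expand only from $s_k$; this is slightly shorter. For (\ref{Einc}) the difference is more substantial: the paper defines the auxiliary curve $\bar r(s)=r(t_1)+\int_{t_1}^{s}\widehat P_1$, uses (\ref{rstar}) to get $r(t_2)\ge\bar r(t_2)$, and then computes $\frac{d}{ds}E_1(s,\bar r(s))=\int f(s,\bar r(s),v)\,(\widehat P_1(s)-\hat v_1)\,dv\ge 0$ directly from the continuity equation. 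Your route---pushing mass forward by the measure-preserving flow, using Lemma~\ref{Lma1} to confine it to $\{x\le X^\ast\}$, and then using $X^\ast\le r(t_2)$---is more structural and avoids differentiating $E_1$ along a curve. For (\ref{Pstar}) you run the IVT-on-momenta scheme (the one the paper used for (\ref{rstar})), while the paper instead uses the one-sided Taylor bounds $P_1(s_{k+1})\ge V_1(s_{k+1},s_k,r(s_k),P(s_k))$ and $P_1(s_k)\ge V_1(s_k,s_{k+1},r(s_{k+1}),P(s_{k+1}))$ directly, without finding $\tau$; both end in the same sandwich between left and right Riemann sums of the monotone integrand $E_1(s,r(s))$. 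Your appeal to Theorem~\ref{Thm2} for a uniform bound on $\|\rho(t)\|_\infty$ is legitimate (no circularity) but unnecessary: on the fixed interval $[t_1,t_2]$ the local-in-time bounds on $\rho$ and $j_1$ coming from compact support already give $|\ddot V_1|\le C$.
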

\begin{proof}
Proceeding as in the previous lemma, $$P_1(s_{k+1}) \geq V_1(s_{k+1},s_k,r(s_k),P(s_k))$$ and $$P_1(s_k) \geq V_1(s_k,s_{k+1},r(s_{k+1}),P(s_{k+1})).$$  Thus, there is $\tau \in [s_k, s_{k+1}]$ such that $$V_1(\tau,s_{k+1},r(s_{k+1}),P(s_{k+1})) = V_1(\tau, s_k,r(s_k),P(s_k)).$$ Using Lemma \ref{Lma1} again and noticing $$\vert V_2(\tau,s_k,r(s_k),P(s_k)) \vert = \vert P_2(s_k) \vert = \vert P_2(s_{k+1}) \vert = \vert V_2(\tau,s_{k+1},r(s_{k+1}),P(s_{k+1})) \vert,$$ we find $$X(\tau,s_{k+1},r(s_{k+1}),P(s_{k+1})) \geq X(\tau, s_k,r(s_k),P(s_k))$$ follows immediately.  Now, $$ X(\tau,s_{k+1},r(s_{k+1}),P(s_{k+1})) \leq r(s_{k+1}) + \widehat{P}_1(s_{k+1})(\tau - s_{k+1}) + C (\Delta s)^2$$ and $$ X(\tau,s_k,r(s_k),P(s_k)) \geq r(s_k) + \widehat{P}_1(s_k)(\tau - s_k) - C (\Delta s)^2,$$ so $$0 \leq r(s_{k+1}) - r(s_k) - \widehat{P}_1(s_{k+1})(s_{k+1} - \tau) - \widehat{P}_1(s_k)(\tau - s_k) + C (\Delta s)^2$$ and hence $$r(t_2) - r(t_1) \geq \sum_{k=0}^{n-1} \left [ \widehat{P}_1(s_{k+1})(s_{k+1} - \tau) + \widehat{P}_1(s_k)(\tau - s_k)\right ] - Cn (\Delta s)^2.$$  Using the Lipschitz continuity of $\widehat{P}_1$, we find $$r(t_2) - r(t_1) \geq \sum_{k=0}^{n-1} \widehat{P}_1(s_k)(s_{k+1} - s_k) - Cn (\Delta s)^2.$$ and (\ref{rstar}) follows.  Next, let $$\bar{r}(s) = r(t_1) + \int_{t_1}^s \widehat{P}_1(\tau) d\tau.$$  From the definition of $P(t)$ and as in the proof of Lemma \ref{Lma1}, we have for $(x,v) \in \mS(t)$, $$ \hat{v}_1 = \frac{v_1}{\sqrt{1 + v_1^2 + v_2^2}} \leq \frac{P_1(t)}{\sqrt{1 + P_1^2(t) + P_2^2}} = \widehat{P}_1(t). $$Since $r(t_2) \geq \bar{r}(t_2)$, and $E_1(t,\cdot)$ is increasing, we find
\begin{eqnarray*}
E_1(t_2,r(t_2)) - E_1(t_1,r(t_1)) & \geq & E_1(t_2,\bar{r}(t_2)) - E_1(t_1,\bar{r}(t_1)) \\
& = & \left. \int_{t_1}^{t_2} ( \partial_t E_1 + \bar{r}^\prime(t) \partial_x E_1) \right \vert_{(t,\bar{r}(t))} \ dt \\
& = & \int_{t_1}^{t_2} \int f(t,\bar{r}(t), v) \left ( \widehat{P}_1(t) - \hat{v}_1 \right ) \ dv dt\\
& \geq & 0
\end{eqnarray*}
and (\ref{Einc}) holds.  Next,
\begin{eqnarray*}
\lefteqn{\frac{P_1(s_{k+1}) - P_1(s_k)}{s_{k+1} - s_k} - E_1(s_k, r(s_k))}\\ & & \geq \frac{V_1(s_{k+1}, s_k, r(s_k),P(s_k)) - V_1(s_k, s_k, r(s_k),P(s_k))}{s_{k+1} - s_k}\\
& & \qquad - \ \dot{V}_1(s_k, s_k, r(s_k),P(s_k)) \\
& & \geq - C\Delta s
\end{eqnarray*}
and similarly $$\frac{P_1(s_{k+1}) - P_1(s_k)}{s_{k+1} - s_k} - E_1(s_{k+1}, r(s_{k+1})) \leq C \Delta s.$$ Hence, $$P_1(t_2) - P_1(t_1) - \sum_{k=0}^{n-1} E_1(s_k, r(s_k))(s_{k+1} - s_k) \geq -Cn (\Delta s)^2$$ and $$P_1(t_2) - P_1(t_1) - \sum_{k=0}^{n-1} E_1(s_{k+1}, r(s_{k+1}))(s_{k+1} - s_k) \leq Cn (\Delta s)^2.$$  Using (\ref{Einc}) and the boundedness of $E_1$ yields $$\left \vert P_1(t_2) - P_1(t_1) - \sum_{k=0}^{n-1} E_1(s_{k+1}, r(s_{k+1}))(s_{k+1} - s_k) \right \vert \leq Cn (\Delta s)^2 + C \Delta s$$ for every $n \in \mathbb{N}$ and (\ref{Pstar}) follows.
\end{proof}

\begin{corollary}
$\displaystyle E_1(s,r(s)) \rightarrow \frac{1}{2}M$ as $s \rightarrow \infty$.
\end{corollary}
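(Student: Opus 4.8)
The plan is to combine the monotonicity of $t\mapsto E_1(t,r(t))$ with the two-sided information we already have on the growth of $P_1$. First I would observe that by (\ref{Einc}) the function $s\mapsto E_1(s,r(s))$ is nondecreasing, and since $E_1$ only takes values in $[-\tfrac12 M,\tfrac12 M]$ it is bounded above by $\tfrac12 M$. Hence the limit
$$L:=\lim_{s\to\infty} E_1(s,r(s))$$
exists and satisfies $L\le \tfrac12 M$. (Monotonicity also guarantees measurability, so no integrability issue arises in what follows.)

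Next I would use the identity (\ref{Pstar}) with $t_1=0$, $t_2=t$, which gives
$$P_1(t)=P_1(0)+\int_0^t E_1(s,r(s))\,ds .$$
Dividing by $t$ and using the elementary fact that the time-average of a function converging to $L$ also converges to $L$ (a one-line $\varepsilon$/Cesàro argument, valid here because the integrand is bounded), we obtain $P_1(t)/t\to L$ as $t\to\infty$.

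Finally I would invoke the Remark following Lemma \ref{Lma2}, namely (\ref{P1}), which yields $P_1(t)\ge p_1(0)+\tfrac12 Mt$ and therefore $\liminf_{t\to\infty}P_1(t)/t\ge \tfrac12 M$. Combined with $P_1(t)/t\to L$ this forces $L\ge \tfrac12 M$, and together with the upper bound $L\le\tfrac12 M$ from the first step we conclude $L=\tfrac12 M$, i.e. $E_1(s,r(s))\to\tfrac12 M$.

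I do not expect a serious obstacle here: the argument is essentially bookkeeping with facts already established in Lemmas \ref{Lma2} and \ref{Lma3}. The only point requiring a little care is the passage from the integral identity (\ref{Pstar}) to the statement $P_1(t)/t\to L$ — one must justify interchanging the limit with the averaging, which is immediate from the uniform bound $|E_1|\le\tfrac12 M$ — and making sure the monotone quantity $E_1(s,r(s))$ is genuinely nondecreasing in $s$ (not merely nondecreasing along the discrete partition used in the proof of Lemma \ref{Lma3}), which (\ref{Einc}) provides directly for all $t_2\ge t_1\ge 0$.
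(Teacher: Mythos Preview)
Your proposal is correct and follows essentially the same route as the paper: monotonicity of $E_1(s,r(s))$ from (\ref{Einc}) together with the bound $E_1\le\tfrac12 M$ gives existence of the limit $L\le\tfrac12 M$, while combining (\ref{Pstar}) with the lower bound (\ref{P1}) forces $L\ge\tfrac12 M$. The only cosmetic difference is that the paper argues directly from $\int_0^t E_1(s,r(s))\,ds\ge\tfrac12 Mt-C$ rather than passing through the Ces\`aro statement $P_1(t)/t\to L$, but the content is identical.
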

\begin{proof}
From the previous results (\ref{P1}) and (\ref{Pstar}), we may deduce $$P_1(0) + \int_0^t E_1(s,r(s)) \ ds = P_1(t) \geq p_1(t) \geq p_1(0) + \frac{1}{2}Mt$$ and hence for all $t\geq 0$, $$\int_0^t E_1(s,r(s)) \ ds \geq \frac{1}{2}Mt - C.$$  Additionally, we know that $E_1(s,r(s))$ is increasing and $E_1(s,r(s)) \leq \frac{1}{2}M$ for all $s \geq 0$. Hence, $E_1(s,r(s))$ must converge as $s \rightarrow \infty$ and its limit can be no greater and no less than $\frac{1}{2}M$.
\end{proof}

Finally, we may prove the non-decay result.  In the proof of Theorem \ref{Thm1} the generic constant ``$C$'' may depend upon the fixed variable $t>0$.

\begin{proof}[Proof of Theorem \ref{Thm1}]
Note that we may rewrite $E_1$ as $$E_1 = \frac{1}{2} \int_{-\infty}^x \rho dy - \frac{1}{2}\int_x^\infty \rho dy = \int_{-\infty}^x \rho dy - \frac{1}{2}M$$ so that $$\int_{-\infty}^x \int f(t,y,v) \ dv dy = \int_{-\infty}^x \rho(t,y) dy = \frac{1}{2}M + E_1(t,x).$$  By the corollary, we may choose $t$ large enough so that $$E_1(t,r(t)) > 0.4M$$ and hence $$\int_{-\infty}^{r(t)} \iint_{-\infty}^{P_1(t)} f(t,x,v) dv_1 dv_2 dx > 0.9M.$$  By the Intermediate Value Theorem, there is $\epsilon > 0$ such that $$\int_{-\infty}^{r(t)-\epsilon} \iint_{-\infty}^{P_1(t)-\epsilon} f(t,x,v) dv_1 dv_2 dx = 0.8M.$$
Additionally, notice that $$\int_{r(t)-\epsilon}^{r(t)} \iint_{P_1(t)-\epsilon}^{P_1(t)} f(t,x,v) dv_1 dv_2 dx > 0$$ since $(r(t),P(t)) \in \mS(t)$.  If $(X(t),V(t))$ is any characteristic then
\begin{eqnarray*}
\lefteqn{\frac{d}{ds} \int_{-\infty}^{X(s)} \iint_{-\infty}^{V_1(s)} f(s,x,v) dv_1 dv_2 dx}\\  & & = \int_{-\infty}^{X(s)} \int f(s,x,V_1(s),v_2) \left ( E_1(s,X(s)) - E_1(s,x) \right )  d v_2dx\\
& & \qquad + \ \iint_{-\infty}^{V_1(s)} f(s,X(s),v) \left ( \widehat{V}_1(s) - \hat{v}_1 \right ) dv_1 dv_2\\
& & \geq 0
\end{eqnarray*}
Similarly, we find $$\frac{d}{ds} \int_{X(s)}^{\infty} \iint_{V_1(s)}^{\infty} f(s,x,v) dv_1dv_2 dx \geq 0.$$  We take $$X(s) = X(s,t,r(t)-\epsilon, P_1(t)-\epsilon,P_2),$$ $$V_1(s) = V_1(s,t,r(t)-\epsilon, P_1(t)-\epsilon,P_2),$$ and $$V_2(s) = P_2(s)=P_2.$$
Then, from above
\begin{equation}
\label{fintleft}
\int_{-\infty}^{X(s)} \iint_{-\infty}^{V_1(s)} f(s,x,v) dv_1 dv_2 dx \geq 0.8M
\end{equation} and
\begin{equation}
\label{fintright}
\int_{X(s)}^{\infty} \iint_{V_1(s)}^{\infty} f(s,x,v) dv_1 dv_2 dx \geq  C> 0
\end{equation} for $s \geq t$.  From (\ref{fintleft}) and the definition of $E_1$ it follows that $$E_1(s,X(s)) \geq 0.3M$$ and $$V_1(s) \geq 0.3M(s-t).$$ Since $h(v_1,v_2) = \hat{v}_1$ satisfies $\displaystyle \frac{\partial h}{\partial v_1} = (1 + \vert v \vert^2)^{-3/2} (1+v_2^2) > 0$ and is thus increasing in $v_1$, we find for $s \geq t$
\begin{equation}
\label{vhatlb}
\widehat{V}_1(s) \geq \frac{0.3M(s-t)}{\sqrt{1 + [0.3M(s-t)]^2 + P_2^2}}.
\end{equation}  Put $A = 0.3M(s-t)$.  From (\ref{vhatlb}) and the compact $v_2$-support of $f$ we find
\begin{eqnarray*}
1 - \widehat{V}_1(s) & \leq & \frac{\sqrt{1 + A^2 + P_2^2} - A}{\sqrt{1 + A^2 + P_2^2}} \\
& = & \frac{1 + P_2^2}{\sqrt{1 + A^2 + P_2^2} \left ( \sqrt{1 + A^2 + P_2^2} + A \right )}\\
& \leq & \frac{C}{1 + A^2}
\end{eqnarray*}
and thus
\begin{equation}
\label{V1bd}
1 - \widehat{V}_1(s) \leq \frac{C}{1 + [0.3M(s-t)]^2}
\end{equation}
for $s \geq t$. Because $\vert \widehat{V}_1(s) \vert < 1$, we see from the compact $x$-support of $f$ that $f(t,x,v) = 0$ for $x \geq R(0) + t$.  Thus, $\rho(t,x) = 0$ for $x \geq R(0) + t$.  Finally, (\ref{fintright}) yields
\begin{eqnarray*}
C & \leq & \int_{X(s)}^\infty \rho(s,x) \ dx \\
& = & \int_{X(s)}^{R(0) + s} \rho(s,x) \ dx \\
& \leq & \Vert \rho(s) \Vert_p \left ( R(0) + s - X(s) \right )^{1-\frac{1}{p}}.
\end{eqnarray*}
However, using (\ref{V1bd})
\begin{eqnarray*}
R(0) + s - X(s) & = & R(0) + t - X(t) + (s - t) - \left( X(s) - X(t) \right ) \\
& = & R(0) + t - X(t) + \int_t^s \left ( 1 - \widehat{V}_1(\tau) \right ) d\tau \\
& \leq & C +  \int_t^\infty \frac{C}{1 + [0.3M(\tau-t)]^2} d\tau \\
& \leq & C
\end{eqnarray*}
where $C$ may depend upon the fixed time $t$ but not on $s \geq t$.  So, we find $$C \leq \Vert \rho(s) \Vert_p$$ and the proof of the theorem is complete.
\end{proof}

\begin{proof}[Proof of Theorem \ref{Thm2}]
We will sometimes use an abbreviated notation for characteristics, namely $$X(s) = X(s,t,x,v)$$ and similarly for $V(s)$.  Let $t > 0$ and define $$\tau^* = \inf \{ \tau \in [0,t) : \frac{\partial X}{\partial v_1}(s,t,x,v) < 0 \quad \forall s \in (\tau,t) \}.$$  Note that (\ref{char}) implies $$\frac{d}{ds} \left. \left ( \frac{\partial X}{\partial v_1} \right ) \right \vert_{s=t} = \left (1 + \vert v \vert ^2 \right)^{-3/2} (1 + v_2^2) > 0$$ and $$ \left. \frac{\partial X}{\partial v_1}  \right \vert_{s=t} = 0$$ so that $\tau^*$ is well-defined.  Next, for every $s \in [\tau^*,t]$ we know $\displaystyle \frac{\partial X}{\partial v_1}(s) \leq 0$ so $$ \frac{d}{ds} \left ( \frac{\partial V_1}{\partial v_1} \right ) = \partial_x E_1 \ \frac{\partial X}{\partial v_1} \leq 0,$$ and thus $$\frac{\partial V_1}{\partial v_1}(s) \geq \left. \frac{\partial V_1}{\partial v_1} \right \vert_{s=t} = 1.$$ Using this lower bound, $$ \frac{d}{ds} \left ( \frac{\partial X}{\partial v_1} \right ) =  \frac{1 + v_2^2}{(1 + \vert V \vert^2)^{3/2} } \frac{\partial V_1}{\partial v_1} \geq \frac{1 + v_2^2}{(1 + \vert V \vert^2)^{3/2}} > 0,$$ and hence $$\frac{\partial X}{\partial v_1}(\tau^*,t,x,v) <0  .$$  From this, it follows that $\tau^* = 0$ and hence $\displaystyle 1 \leq  \frac{\partial V_1}{\partial v_1}(s)$ for $0 \leq s \leq t$.  Now a change of variable in $\rho$ will complete the theorem:

\begin{eqnarray*}
\rho(t,x) & = & \int f(0,X(0),V(0)) \ dv \\
& = & \int \frac{f_0(X(0),V(0))}{\frac{\partial V_1}{\partial v_1}(0)} \frac{\partial V_1}{\partial v_1}(0) \ dv\\
& \leq & \iint F_0(V_1(0,t,x,v), v_2) \frac{\partial V_1}{\partial v_1}(0) \ dv_1 dv_2\\
& = & \int F_0(w) \ dw.
\end{eqnarray*}
\end{proof}

\begin{proof}[Proof of Theorems \ref{Thm3} and \ref{Thm5}]
We will first prove the result for (\ref{VP}) and then comment on the necessary changes to adapt the proof to (\ref{RVP}).  Since we have defined the characteristics for (\ref{RVP}) in (\ref{char}), let us do the same for (\ref{VP}), noting that the only difference occurs in the equation for $\displaystyle \frac{dX}{ds}$:
\begin{equation}
\label{charVP} \left. \begin{array}{ccc}
& & \displaystyle \frac{dX}{ds} = V_1 (s) \\
& & \displaystyle \frac{dV_1}{ds} = E_1(s,X(s)) \\
& & V_2(s,t,x,v) = v_2 \\
\\
& & X(t,t,x,v) = x \\
& & V_1(t,t,x,v) = v_1
\end{array} \right \}
\end{equation}
Now, by the definition of $E_1$ in (\ref{VP}) and the well-known conservation of charge identity, we find $\vert E_1(t,x) \vert \leq \frac{1}{2}M$ for every $x\in\mathbb{R}, t > 0$ and $\displaystyle \lim_{x \rightarrow \pm \infty} E_1(t,x) = \pm \frac{1}{2}M$.  Using (\ref{charVP}) and the compact support of $f_0$ in $v_1$, $$ \vert V_1(t) \vert = \left \vert V_1(0) + \int_0^t E_1(s,X(s)) \ dx \right \vert \leq C + \frac{1}{2}Mt$$ for any characteristics $(X(t), V(t))$ along which $f(t,X(t),V(t)) \neq 0$.  Taking the supremum over all such characteristics, we find $$Q_1(t) \leq Ct$$ for $t$ large enough.  The result can be seen to hold for (\ref{RVP}), as well, since the equation for $\displaystyle \frac{dV_1}{ds}$ in (\ref{char}) does not differ from that of (\ref{charVP}) and the same bound $\vert E_1(t,x) \vert \leq \frac{1}{2}M$ again follows from charge conservation.\\

To obtain the contrasting inequality for (\ref{VP}), multiply the Vlasov equation by $v_1 E_1$ and integrate over $v$ to find
$$ \int v_1 E_1 \partial_t f dv + \int v_1^2 E_1 \partial_x f dv + \int v_1 E_1^2 \partial_{v_1} f \ dv = 0,$$  which is equivalent to
\begin{equation}
\label{VlasovE}
\partial_t \int v_1 E_1f \ dv - (\partial_t E_1) \int v_1 f \ dv + \partial_x \int v_1^2 E_1 f dv - \rho \int v_1^2 f \ dv = \int E_1^2 f dv.
\end{equation}
Let $j_1(t,x) = \int v_1 f(t,x,v) dv.$ As usual, integrating the Vlasov equation of (\ref{VP}) in $v$ yields the continuity equation $$\partial_t \rho + \partial_x j_1 = 0.$$  Using this, we see that $\partial_t E_1 = -j_1$ and integrating (\ref{VlasovE}) in $x$ yields
$$ \frac{d}{dt} \iint v_1 E_1f \ dv dx = \iint \rho v_1^2 f \ dv dx - \int j_1^2 \ dx + \iint E_1^2 f \ dv dx$$ or
\begin{equation}
\label{Thm2star}
\frac{d}{dt} \iint v_1 E_1f \ dv dx = \int \left ( \rho \int v_1^2 f \ dv - j_1^2 \right ) dx + \int \rho E_1^2 \ dx.
\end{equation}
Now, by Cauchy-Schwarz the first term on the right side of (\ref{Thm2star}) is nonnegative since
$$\left \vert \int v_1 f \ dv \right \vert \leq \left ( \int v_1^2 f \ dv \right )^{1/2} \left (\int f \ dv \right )^{1/2}$$ and thus $$\rho \int v_1^2 f dv =  \left (\int f \ dv \right ) \left (\int v_1^2 f \ dv \right ) \geq \left (\int v_1 f \ dv \right )^2 = j_1^2.$$
The second term on the right side of (\ref{Thm2star}) can be simplified as well
$$ \int \rho E_1^2 \ dx = \frac{1}{3} \int \partial_x (E_1^3) \ dx = \frac{1}{3} \left ( \frac{M^3}{8} - \frac{(-M)^3}{8} \right ) = \frac{M^3}{12}.$$
 Thus, from (\ref{Thm2star}) we deduce $$\frac{d}{dt} \iint v_1 f E_1 \ dv dx \geq \frac{M^3}{12}$$  and by integrating in time and using charge conservation, i.e. $\Vert f(t) \Vert_1 = \Vert f_0 \Vert_1  = M$,
\begin{eqnarray*}
\frac{M^3}{12}t & \leq & \iint E_1(t,x) v_1 f(t,x,v) \ dv dx - \iint E_1(0,x) v_1 f_0(x,v) \ dv dx \\
& \leq & C\left (1 + \frac{1}{2}M^2 Q_1(t) \right ) \\
& \leq & C(1 + Q_1(t)).
\end{eqnarray*}
Hence, for $t$ large enough $Q_1(t) \geq Ct$.  For (\ref{RVP}) the analogous claim follows immediately from (\ref{P1}) since $$Q_1(t) \geq P_1(t) \geq p_1(t) \geq p_1(0) + \frac{1}{2}Mt.$$ Hence, for $t$ large this implies $$Q_1(t) \geq Ct$$ and the proof is complete.
\end{proof}

\section{Behavior of solutions to (\ref{RVPN})}

\begin{proof}[Proof of Theorem \ref{Thm4}]
We first obtain \emph{a priori} bounds along the light cone as in \cite{GlaSch}.  Put $$e(t,x) = \int \sqrt{1 + \vert v \vert^2} \sum_\alpha f_\alpha(t,x,v) \ dv + \frac{1}{2} E_1^2(t,x),$$ $$m(t,x) = \int v_1 \sum_\alpha f_\alpha(t,x,v) \ dv.$$ As is well known, the local conservation of energy can be obtained by summing all of the Vlasov equations and integrating in $v$ to find   $$ \partial_t e + \partial_x m = 0.$$ An integration over all $x$ then yields a bound on the total energy $$\int e(t,x) \ dx = \int e(0,x) \ dx \leq C.$$  Here, we must point out that this identity, and those which follow, hold only under the assumption of neutrality (\ref{Neutrality}).  If the plasma is not neutral (as is the case for the (\ref{RVP}) system), then $E_1(t,x) \rightarrow \pm \frac{1}{2}M$ as $x \rightarrow \pm \infty$ for every $t \geq 0$, where $M \neq 0$ is the total charge.  In these cases, $E(t,\cdot) \not\in L^2(\mathbb{R})$ and the total potential energy is infinite.  Now, for $T>0$ we integrate the local energy identity over the forward cone originating from the point $(0,x)$ $$0 = \int_0^T \int_{x-t}^{x+t}  \left ( \partial_t e + \partial_y m \right ) \ dy dt$$ and use Green's Theorem to obtain
\begin{equation}
\label{coneT}
\int_0^T (e - m)(t,x+t) \ dt + \int_0^T (e + m)(t,x-t) \ dt = \int_{x-T}^{x+T} e(T,y) \ dy.
\end{equation}
Since $e \pm m = \int (\sqrt{1 + \vert v \vert^2} \pm v_1) \sum_\alpha f_\alpha \ dv + \frac{1}{2} E_1^2 \geq 0$ and $$\int_{x-T}^{x+T} e(T,y) \ dy \leq \int e(T,y) \ dy = \int e(0,y) \ dy,$$ we have a bound on each term on the left side of (\ref{coneT}) for all $T \geq 0$.  Hence, the integral of $e \pm m$ is bounded along the rays of the cone:
\begin{equation}
\label{conefg}
\int_0^\infty \left [ \int \left (\sqrt{1 + \vert v \vert^2} - v_1 \right ) \sum_\alpha f_\alpha dv + \frac{1}{2} E_1^2 \right ] (t,x+t) \ dt \leq C
\end{equation}
and
$$\int_0^\infty \left [ \int \left (\sqrt{1 + \vert v \vert^2} + v_1 \right ) \sum_\alpha f_\alpha dv + \frac{1}{2} E_1^2 \right ] (t,x-t) \ dt \leq C$$
for any $x \in \mathbb{R}$.  Next, we use a lemma which allows us to utilize the cone estimate in a way that controls the $v$-integral of $\sum_\alpha f_\alpha$.

\begin{lemma} \label{Lma4}
Let $$\sigma_-(t,x) = \int \left (\sqrt{1 + \vert v \vert^2} - v_1\right ) \sum_\alpha f_\alpha(t,x,v) \ dv,$$ $$\sigma_+(t,x) = \int \left (\sqrt{1 + \vert v \vert^2} + v_1\right ) \sum_\alpha f_\alpha(t,x,v) \ dv,$$ and $$k(t,x) = \int \sqrt{1 + \vert v \vert^2} \sum_\alpha f_\alpha(t,x,v) \ dv.$$ Then, we have $$\int \sum_\alpha f_\alpha dv \leq 3 \sqrt{\sigma_- k}$$ and $$\int \sum_\alpha f_\alpha dv \leq 3 \sqrt{\sigma_+ k}.$$
\end{lemma}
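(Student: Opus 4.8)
The plan is to prove the first inequality; the second will follow from it by the substitution $v_1 \mapsto -v_1$, which interchanges $\sigma_-$ and $\sigma_+$ while leaving $k$ unchanged. Write $g = \sum_\alpha f_\alpha \geq 0$ and set $\mu(v) = \sqrt{1 + \vert v \vert^2} - v_1$, so that $\sigma_- = \int \mu g \, dv$ and $k = \int \sqrt{1+\vert v\vert^2}\, g \, dv$. If $k = 0$, then since $\sqrt{1+\vert v\vert^2} \geq 1$ we get $g \equiv 0$ a.e. and the inequality is trivial; so I may assume $k > 0$, and likewise $\sigma_- > 0$ since $\mu > 0$ everywhere.

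The one idea needed is the elementary identity
$$\mu(v)\left(\sqrt{1+\vert v\vert^2} + v_1\right) = 1 + v_2^2 \geq 1,$$
which says that wherever $\mu$ is small, $\sqrt{1+\vert v\vert^2}+v_1$ — and hence $\sqrt{1+\vert v\vert^2}$ itself — must be large. Concretely, I would fix a threshold $\lambda > 0$ and split the $v$-integral of $g$ over the sets $\{\mu \geq \lambda\}$ and $\{\mu < \lambda\}$. On the first set $g \leq \lambda^{-1}\mu g$, so this piece contributes at most $\lambda^{-1}\sigma_-$. On the second set the identity gives $\sqrt{1+\vert v\vert^2} + v_1 > \lambda^{-1}$, and since $v_1 \leq \sqrt{1+\vert v\vert^2}$ this forces $\sqrt{1+\vert v\vert^2} > (2\lambda)^{-1}$; hence $g \leq 2\lambda \sqrt{1+\vert v\vert^2}\, g$ there, and this piece contributes at most $2\lambda k$.

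Combining the two estimates gives $\int g \, dv \leq \lambda^{-1}\sigma_- + 2\lambda k$ for every $\lambda > 0$. Choosing the optimal value $\lambda = \sqrt{\sigma_-/(2k)}$ yields $\int g \, dv \leq 2\sqrt{2}\,\sqrt{\sigma_- k} < 3\sqrt{\sigma_- k}$, which is the claimed bound (with a little room to spare). Finally, repeating the argument with $v_1$ replaced by $-v_1$ — so that $\sqrt{1+\vert v\vert^2}+v_1$ now plays the role of $\sqrt{1+\vert v\vert^2}-v_1$ and $\sigma_+$ replaces $\sigma_-$ — gives $\int g \, dv \leq 3\sqrt{\sigma_+ k}$.

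There is no real obstacle here: the entire content is the algebraic identity $\mu(\sqrt{1+\vert v\vert^2}+v_1) = 1+v_2^2$ together with the $\lambda$-splitting. The only things warranting a moment of care are the degenerate cases $k = 0$ or $\sigma_\pm = 0$, and verifying that the optimized constant $2\sqrt{2}$ indeed falls below the stated constant $3$.
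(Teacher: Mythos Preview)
Your proof is correct and follows essentially the same strategy as the paper: both rely on the identity $(\sqrt{1+|v|^2}-v_1)(\sqrt{1+|v|^2}+v_1)=1+v_2^2\geq 1$ and a split-and-optimize argument. The only difference is cosmetic: the paper splits on the level sets of $|v|$ (i.e.\ $|v|<R$ versus $|v|\geq R$) and optimizes over $R$, whereas you split directly on the level sets of $\mu=\sqrt{1+|v|^2}-v_1$ and optimize over the threshold $\lambda$. Your choice is slightly more direct---it avoids the separate treatment of the case $k<\sigma_-$ that the paper needs, and it yields the sharper constant $2\sqrt{2}$ rather than $3$---but the two arguments are the same in substance.
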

\begin{proof}
We will show the former inequality, as the proof of the latter is similar.  Let $R \geq 0$ and split the $v$-integral:
\begin{eqnarray*}
\int \sum_\alpha f_\alpha \ dv & = & \int_{\vert v \vert < R} \sum_\alpha f_\alpha \ dv + \int_{\vert v \vert > R} \sum_\alpha f_\alpha \ dv \\
& \leq & \int_{\vert v \vert < R} \sum_\alpha f_\alpha \ dv + \frac{k}{\sqrt{1 + R^2}}.
\end{eqnarray*}
Consider $\vert v \vert \leq R$ and notice
\begin{eqnarray*}
\sqrt{1 + \vert v \vert^2} - v_1 & = & \frac{1 + v_2^2}{\sqrt{1 + \vert v \vert^2} + v_1} \\
& \geq & \frac{1 + v_2^2}{2 \sqrt{1 + \vert v \vert^2} } \\
& \geq & \frac{1}{2 \sqrt{1 + \vert v \vert^2} } \\
& \geq & \frac{1}{2 \sqrt{1 + R^2} } \\
\end{eqnarray*}
So, we find
\begin{eqnarray*}
\int \sum_\alpha f_\alpha \ dv & \leq & \int_{\vert v \vert \leq R} \sum_\alpha f_\alpha \cdot \left ( \sqrt{1 + \vert v \vert^2} - v_1 \right ) \left ( 2\sqrt{1 + R^2} \right ) \ dv + \frac{k}{\sqrt{1 + R^2}}\\
& \leq & 2 \sqrt{1 + R^2} \cdot \sigma_-(t,x) + \frac{k(t,x)}{\sqrt{1 + R^2}}.
\end{eqnarray*}
If $0 < \sigma_- \leq k$ take $\displaystyle R = \sqrt{\frac{k}{\sigma_-} - 1}$ so that $\displaystyle \sqrt{1 + R^2} = \sqrt{\frac{k}{\sigma_-}}$ and
$$ \int \sum_\alpha f_\alpha \ dv \leq 2 \sqrt{\frac{k}{\sigma_-}} \cdot \sigma_- + \frac{k}{\sqrt{k / \sigma_-}} = 3 \sqrt{\sigma_- k}.$$  Now, if $k < \sigma_-$ then $$\int \sum_\alpha f_\alpha \ dv < \int \sqrt{1 + \vert v \vert^2} \sum_\alpha f_\alpha \ dv = k < \sqrt{\sigma_- k} \leq 3 \sqrt{\sigma_- k}.$$ Finally, if $\sigma_- = 0$ then since $f_\alpha \geq 0$ for all $\alpha = 1,...,N$ $$\int \sum_\alpha f_\alpha \ dv = 0 = 3 \sqrt{\sigma_- k}$$ and the proof is complete.
\end{proof}

Since we are now considering the relativistic, neutral system (\ref{RVPN}), recall that the characteristic equations are similar to that of (\ref{char}) but must depend upon the charge of the corresponding species:
\begin{equation}
\label{charRVPN} \left. \begin{array}{ccc}
& & \displaystyle \frac{dX_\alpha}{ds} = \widehat{V}_{1\alpha} (s) \\
& & \displaystyle \frac{dV_{1\alpha}}{ds} = e_\alpha E_1(s,X(s)) \\
& & V_{2\alpha} (s,t,x,v) = v_2 \\
\\
& & X_\alpha(t,t,x,v) = x \\
& & V_{1\alpha}(t,t,x,v) = v_1
\end{array} \right \}
\end{equation}

Now, let $\alpha$ be given and choose characteristics $(X(t),V(t))$ as in (\ref{charRVPN}) such that $f_\alpha(t,X(t),V(t)) \neq 0$.  For brevity, we suppress the characteristic dependence upon the choice of $\alpha$.  Let $$ C_0  = \sup \{ \vert v_1 \vert : \exists t \in [0,1], x, v_2 \in \mathbb{R} \ \mbox{with} \ f_\alpha (t,x,v) \neq 0 \}.$$
and suppose that $t > 0$ and $V_1(t) > 2C_0$.  Define $$\Delta = \sup \left \{ \tau \in (0,t]: V_1(s) \geq \frac{1}{2}V_1(t) \ \forall s \in [t -\tau,t] \right \}$$
and notice that $$V_1(t-\Delta) \geq \frac{1}{2} V_1(t) > C_0$$ so that $t -\Delta > 1$ and
\begin{equation}
\label{V1}
V_1(t-\Delta) = \frac{1}{2}V_1(t).
\end{equation}
Define $$X_c(s) = X(t) + s- t.$$ Then, for $t-\Delta \leq s \leq t$,
\begin{eqnarray*}
\left \vert \frac{d}{ds} ( X(s) - X_c(s) ) \right \vert & = & 1 - \frac{V_1(s)}{\sqrt{1 + \vert V(s) \vert^2}}\\
& = & \frac{1 + V_2(s)^2}{\sqrt{1 + \vert V(s) \vert^2} (\sqrt{1 + \vert V(s) \vert^2} + V_1(s))} \\
& \leq & 4 \frac{1 + V_2(0)^2}{V_1(t)^2} \\
& \leq & \frac{C}{V_1(t)^2}.
\end{eqnarray*}
Hence, integrating in time over $[t-\Delta,t]$ we find
\begin{equation}
\label{Xcone}
\vert X(s) - X_c(s) \vert \leq \frac{C\Delta}{V_1(t)^2}.
\end{equation}
Integrating (\ref{charRVPN}) and using the cone estimate (\ref{conefg}), we find
\begin{eqnarray*}
V_1(t) & = & V_1(t-\Delta) + e_\alpha \int_{t-\Delta}^t E_1(s,X_c(s)) \ ds \\
& & \ + \ e_\alpha \int_{t-\Delta}^t ( E_1(s,X(s)) - E_1(s,X_c(s))) \ ds \\
& \leq & V_1(t-\Delta) +  \left (\max_\alpha \vert e_\alpha \vert \right) \left [ \Delta^{1/2} \left ( \int_{t-\Delta}^t E_1^2(s,X_c(s)) \ ds \right )^{1/2} \right. \\
& \ & \ \left. + \ \int_{t-\Delta}^t \int_{X_c(s)}^{X(s)} \int \sum_\alpha \vert e_\alpha \vert f_\alpha (s,x,v) \ dv dx ds \right ] \\
& \leq & V_1(t-\Delta) + C\Delta^{1/2} + C \int_{t-\Delta}^t \int_{X_c(s)}^{X(s)} \int \sum_\alpha f_\alpha (s,x,v) \ dv dx ds.
\end{eqnarray*}
We use Lemma \ref{Lma4} and (\ref{Xcone}) on the last term to find
\begin{equation}
\label{sqrtks}
\int_{t-\Delta}^t \int_{X_c(s)}^{X(s)} \int \sum_\alpha f_\alpha \ dv dx ds \leq 3 \int_{t-\Delta}^t \int_{X_c(s)}^{X_c(s) + C \Delta V_1(t)^{-2}} \sqrt{\sigma_- k} \ dx ds.
\end{equation}  Now, the well-known conservation of energy identity gives a uniform bound on $\int k(t,x) \ dx$ and thus
\begin{eqnarray}
\nonumber \int_{t-\Delta}^t \int_{X_c(s)}^{X_c(s) + C \Delta V_1(t)^{-2}} k(s,x) \ dx ds & \leq & \int_{t-\Delta}^t \int k(s,x) \ dx ds\\
\nonumber & \leq & \int_{t-\Delta}^t C \ ds \\
\label{k} & = &  C \Delta.
\end{eqnarray}
Using (\ref{conefg}), the integral of $\sigma_-$ can be estimated as
\begin{eqnarray*}
\lefteqn{\int_{t-\Delta}^t \int_{X_c(s)}^{X_c(s) + C \Delta V_1(t)^{-2}} \sigma_-(s,x) \ dx ds}\\ & & = \int_{t-\Delta}^t \int_{X(t)-t}^{X(t) -t + C \Delta V_1(t)^{-2}} \sigma_-(s,y+s) \ dy ds\\
& & = \int_{X(t)-t}^{X(t) -t + C \Delta V_1(t)^{-2}} \int_{t-\Delta}^t \sigma_-(s,y+s) \ ds dy\\
& & \leq \int_{X(t)-t}^{X(t) -t + C \Delta V_1(t)^{-2}} C \ dy \\
& & =  C\Delta V_1(t)^{-2}.
\end{eqnarray*}
Combining this with (\ref{k}), the use of Cauchy-Schwarz in (\ref{sqrtks}) gives us
$$ \int_{t-\Delta}^t \int_{X_c(s)}^{X(s)} \int \sum_\alpha f_\alpha \ dv dx dt \leq 3( C \Delta)^{1/2} \left (C \Delta V_1(t)^{-2} \right )^{1/2} \leq C \frac{\Delta}{V_1(t)}$$ and finally, we have
$$V_1(t) \leq V_1(t-\Delta) + C\Delta^{1/2} +C\frac{\Delta}{V_1(t)}.$$
Using (\ref{V1}) this becomes $$ \frac{1}{2} V_1(t) \leq C \left [ \Delta^{1/2} +\frac{\Delta}{V_1(t)} \right ].$$
Hence, $$ C V_1(t)^2 \leq \Delta^{1/2} V_1(t) + \Delta$$ and $$ \left (C + \frac{1}{4} \right ) V_1(t)^2 \leq \left ( \frac{1}{2}V_1(t) + \Delta^{1/2} \right )^2.$$  Finally, $$CV_1(t) = \left ( \sqrt{C + \frac{1}{4}} - \frac{1}{2} \right ) V_1(t) \leq \Delta^{1/2}.$$
By definition, $\Delta \leq t$ and thus $$V_1(t) \leq Ct^{1/2}$$ as long as $V_1(t) \geq 2C_0$.
Similar estimates may be derived if we consider $V_1(t) < -2C_0$.  Combining all cases, we find
$$ \vert V_1(t) \vert \leq \max \{ 2C_0, Ct^{1/2} \}$$ whence $$ \vert V_1(t) \vert \leq 2C_0 + Ct^{1/2}.$$ Since $\alpha$ is arbitrary, this estimate holds for every $f_\alpha$ and taking the supremum over all such characteristics, we finally have $$Q_1(t) \leq C\sqrt{1 + t}$$ completing the proof.
\end{proof}

\section{Behavior of solutions to (\ref{VPN})}
\begin{proof}[Proof of Theorem \ref{Thm6}]
Since we are now considering the classical, neutral system (\ref{VPN}), recall that the characteristic equations are similar to that of (\ref{charVP}), but as for (\ref{RVPN}) they now depend upon the charge of the corresponding species:
\begin{equation}
\label{charVPN} \left. \begin{array}{ccc}
& & \displaystyle \frac{dX_\alpha}{ds} = V_{1\alpha} (s) \\
& & \displaystyle \frac{dV_{1\alpha}}{ds} = e_\alpha E_1(s,X(s)) \\
& & V_{2\alpha} (s,t,x,v) = v_2 \\
\\
& & X_\alpha(t,t,x,v) = x \\
& & V_{1\alpha}(t,t,x,v) = v_1
\end{array} \right \}
\end{equation}
Notice, the only difference between (\ref{charVPN}) and (\ref{charRVPN}) is the absence of the ``hat'' in the equation for $\displaystyle \frac{dX_\alpha}{ds}$.  We begin by deriving a well-known bound which limits the growth of spatial moments for the Vlasov-Poisson system.  Using the Vlasov equation, we find
\begin{eqnarray*}
\frac{d^2}{dt^2} \iint x^2 \sum_\alpha f_\alpha (t,x,v) \ dv dx & = & 2 \frac{d}{dt} \iint xv_1 \sum_\alpha f_\alpha \ dv dx \\
& = & 2 \iint v_1^2 \sum_\alpha f_\alpha \ dv dx\\
& & \ + \ 2 \iint x \sum_\alpha e_\alpha f_\alpha(t,x,v) E_1(t,x) \ dv dx.
\end{eqnarray*}
The first term on the right side of the equation is dominated by the kinetic energy $$\mathcal{K}(t) = \iint \vert v \vert^2 \sum_\alpha f_\alpha (t,x,v) \ dv dx.$$ The second term satisfies
\begin{eqnarray*}
\iint x \sum_\alpha e_\alpha f_\alpha(t,x,v) E_1(t,x) \ dv dx & = & \int \rho x E_1 \ dx\\
& = & \int x \partial_x \left ( \frac{1}{2} E_1^2 \right ) \ dx\\
& = & -\frac{1}{2} \int E_1^2(t,x) \ dx.
\end{eqnarray*}
So, when the two terms are added, the result is uniformly bounded by energy conservation.  Hence, we find
$$ \left \vert \frac{d^2}{dt^2} \iint x^2 \sum_\alpha f_\alpha(t,x,v) \ dv dx \right \vert \leq C$$ and integrating twice in time
$$  \iint x^2 \sum_\alpha f_\alpha(t,x,v) \ dv dx  \leq C(1 + t^2).$$  Using the form of the field, we find for $x > 0$
\begin{eqnarray*}
\vert E_1(t,x) \vert & = & \left \vert \int_x^\infty \rho(t,y) \ dy \right \vert\\
& \leq & C \left (\max_\alpha \vert e_\alpha \vert \right) \int_x^\infty \int \sum_\alpha f_\alpha(t,y,v) \ dv dy\\
& \leq & C \int_x^\infty \int \sum_\alpha f_\alpha(t,y,v) \frac{y^2}{x^2} \ dv dy\\
& \leq & C\frac{1+ t^2}{x^2}.
\end{eqnarray*}
This inequality is similarly obtained for $x < 0$ so that
$$\vert E_1(t,x) \vert \leq C \min \left (1, \frac{1 + t^2}{x^2} \right ).$$

Now, using a method of Horst \cite{Horst} we estimate characteristics.  Let $\alpha$ be given and $(X(t),V(t))$ be characteristics such that $f_\alpha(t,X(t),V(t)) \neq 0$. Suppose that $t > 0$ and $V_1(t) > 0$, and define $$ \tau^* = \inf \{ \tau \in [0,t) : V_1(s) \geq 0 \ \mbox{for} \ s \in[\tau,t] \}.$$
Then, either $V_1(\tau^*)  = 0$ or $\tau^* = 0$ and in both cases, $V_1^2(\tau^*) \leq C$. Using (\ref{charVPN})
\begin{eqnarray*}
V_1^2(t) & \leq & V_1^2(\tau^*) + 2 \left (\max_\alpha \vert e_\alpha \vert \right) \int_{\tau^*}^t \vert E_1(s,X(s)) \vert \ V_1(s) \ ds \\
& \leq & C + C \int_{\tau^*}^t \min \left (1, \frac{1+s^2}{X(s)^2} \right ) \dot{X}(s) \ ds \\
& \leq & C + C \int_{\tau^*}^t \min \left (1, \frac{1+t^2}{X(s)^2} \right ) \dot{X}(s) \ ds \\
& \leq & C + C \int_{X(\tau^*)}^{X(t)} \min \left (1, \frac{1+t^2}{x^2} \right ) \ dx \\
& \leq & C + C \int \min \left (1, \frac{1+t^2}{x^2} \right ) \ dx \\
& \leq & C + C \left ( \int_0^{\sqrt{1+t^2}} 1 \ dx + \int_{\sqrt{1 + t^2}}^\infty \frac{1+t^2}{x^2} \ dx \right ) \\
& \leq & C\left (1 + \sqrt{1 + t^2} \right )\\
& \leq & C(1 + t)
\end{eqnarray*}
Hence, $$V_1(t) \leq C\sqrt{1 + t}.$$
Similar estimates hold for $V_1(t) \leq 0$ and for characteristics of any $f_\alpha$ since $\alpha$ was arbitrary.  With this, we take the supremum over all such characteristics and obtain $Q_1(t) \leq C\sqrt{1 + t}.$
\end{proof}

\newpage

\medskip

\medskip

\end{document}